\documentclass[12pt]{article}
\usepackage{amssymb}
\usepackage{amsmath}
\usepackage{amsthm}
\usepackage[colorlinks=true, allcolors=blue]{hyperref}
\usepackage{dsfont}
\usepackage{multirow}
\usepackage{booktabs}
\usepackage{subfigure}
\usepackage{authblk}
\usepackage{graphicx}
\usepackage{enumitem}
\usepackage[letterpaper,top=3cm,bottom=2cm,left=2.5cm,right=2.5cm,marginparwidth=1.75cm]{geometry}
\usepackage[authoryear, round]{natbib}
\def\dd{{\rm d}}

\def\ee{{\rm e}}
\def\TT{{\rm T}}

\def\mb{\mathbb}
\def\tod{\stackrel{d}{\to}}
\def\nablaq{{^{q}\nabla}}

\numberwithin{equation}{section}
\newtheorem{theorem}{Theorem}
\newtheorem{lemma}[theorem]{Lemma}

\newtheorem{corollary}[theorem]{Corollary}
\newtheorem{condition}{Condition}
\newtheorem{assumption}{Assumption}
\theoremstyle{definition}

\theoremstyle{remark}
\newtheorem{remark}{Remark}
\newcommand\keywords[1]{\textbf{Keywords}: #1}
\newcommand\msccode[1]{\textbf{2020 MSC}: #1}

\begin{document}

\title{The continuous-time limit of quasi score-driven volatility models \thanks{This work is partially supported by the National Natural Science Foundation of China (Grant No. 12371144) and Fundamental Research Funds for the Central Universities (Grant No. CXJJ-2024-439).}}

\date{}

\author{Yinhao Wu$^{a, }$\thanks{Corresponding author. Email: \href{mailto:yinhaowu@stu.sufe.edu.cn}{yinhaowu@stu.sufe.edu.cn}}}
\author{Ping He$^{a}$}

\affil{$^a$\it School of Mathematics, Shanghai University of Finance and Economics, 200433, Shanghai, China}

\maketitle

\begin{abstract}

This paper explores the continuous-time limit of a class of Quasi Score-Driven (QSD) models that characterize volatility. As the sampling frequency increases and the time interval tends to zero, the model weakly converges to a continuous-time stochastic volatility model where the two Brownian motions are correlated, thereby capturing the leverage effect in the market. Subsequently, we identify that a necessary condition for nondegenerate correlation is that the distribution of driving innovations differs from that of computing score, and at least one being asymmetric. We then illustrate this with two typical examples. As an application, the QSD model is used as an approximation for correlated stochastic volatility diffusions and quasi maximum likelihood estimation is performed. Simulation results confirm the method's effectiveness, particularly in estimating the correlation coefficient. 

\end{abstract}

\keywords{Quasi score-driven models; Continuous-time limit; Weak convergence; Stochastic volatility models; Quasi approximate maximum likelihood estimation}

\msccode{60F17, 60J25, 91B70}

\section{Introduction}
Since \cite{engle1982autoregressive} discovered conditional heteroskedasticity in economic data, time series models with time-varying parameters, especially variance, have gained increasing attention in financial research, with the GARCH model being the most well-known \citep{bollerslev1986generalized}. Numerous extensions within the GARCH family have since been developed \citep[e.g.,][]{nelson1991conditional,glosten1993relation,ding1993long}. Essentially, they are all observation-driven models, where the updating equations for parameters are based on observations. For example, the GARCH(1,1)-M model for the log asset price $X$ is given by:
\begin{equation}\label{garch}
    \begin{split}
        &X_n=X_{n-1}+cv_n+y_n,\\
        &v_{n+1}=\omega+\beta v_n+\alpha y_n^2,
    \end{split}
\end{equation}
where $y_n=\sqrt{v_n}\varepsilon_n$ is the innovation of $X_n$, $\{\varepsilon_n\}\stackrel{i.i.d.}{\sim} N(0,1)$, and $v_n$ is actually the conditional variance of the innovation. The variance update depends on the squared magnitude of current returns---large returns lead to higher future variance. However, sometimes such large deviations may just be accidental rather than systematic, implying that the future variance should not increase excessively. This issue is more pronounced when $\varepsilon$ follows a heavy-tailed distribution, such as in the t-GARCH model with Student's t-distributed innovations \citep{bollerslev1987conditionally}. Empirical evidence suggests that the prediction of its variance often results in overestimation \citep{laurent2016testing}.

Based on this consideration, \cite{harvey2008beta} and \cite{creal2013generalized} proposed the Generalized Autoregressive Score (GAS) model independently. The key improvement of this model lies in the parameter updating equation is driven by $\nabla_n$, the derivative of the log conditional density of innovation with respect to the parameter. This derivative, known as the {\it score}, gives the model its alternative name, the Score-Driven (SD) model. 
The model not only generalizes and enhances many time series
models but also demonstrates good empirical performance in fields such as economics, finance, and biology. Recently, it has been shown to be optimal in reducing the global Kullback-Leibler divergence between the true distribution and the postulated distribution \citep{gorgi2023optimality}. Within just a few years, more than 300 papers related to the SD model have been published (see \url{http://www.gasmodel.com/gaspapers.htm}).
 
Recently, \cite{blasques2023quasi} proposed the Quasi Score-Driven (QSD) model, where the score is no longer restricted to the conditional density of the observed innovations. This means the distributions driving the innovation and computing the score can differ. For instance, one can utilize a normal distribution to model innovations while employing the score of a Student's t-distribution to update parameters \citep[as in][]{banulescu2018volatility}, or vice versa, as in t-GARCH models. Therefore, QSD models encompass not only SD models but also other existing models, providing more flexibility.

On the other hand, continuous-time models are favored in theoretical finance research, particularly for derivatives pricing. These models, including the well-known Black-Scholes model,  describe the dynamics of underlying assets through one or a set of stochastic differential equations (SDEs). Among them, the counterparts for addressing time-varying variance include the Heston model \citep{heston1993closed}, the 3/2 model \citep{heston1997simple}, the 4/2 model \citep{grasselli20174} and so on. These are all stochastic volatility models, which assume that the volatility of asset follows a (typically mean-reverting) SDE.

\cite{nelson1990arch} was the first to bridge the gap between discrete and continuous-time models in finance. Specifically, he demonstrated that under certain scaling conditions, the continuous-time limit of equation \eqref{garch} is exactly the following stochastic volatility model:
\begin{equation}\label{cotinuegarch}
	\left\{\begin{aligned}
		&\dd X_t=cv_t\dd t+\sqrt{v_t}\dd W_t^{(1)},\\
		&\dd v_t=(\omega-\theta v_t)\dd t+\alpha v_t\dd W_t^{(2)},\\
		&\mbox{Cov}(\dd W_t^{(1)}, \dd W_t^{(2)})=0.
	\end{aligned}\right.
\end{equation}
In this sense, \eqref{cotinuegarch} is also known as the continuous-time GARCH model in some financial literatures. More generally speaking, as the sampling frequency increases and the time interval of a discrete-time Markov process goes to zero, it weakly converges to a diffusion process. The related research goes back to \cite{stroock1997multidimensional}, \cite{kushner1984approximation}, \cite{ethier1986markov}. Therefore, it can be seen that discrete-time models are intricately related to its approximate continuous-time counterpart. A useful insight is that by estimating the parameters of a discrete-time model, one can recover the parameters of the continuous-time process it approximates. This idea is known as Quasi Approximate Maximum Likelihood (QAML) estimation \citep[see e.g.][]{barone2005option, fornari2006approximating, stentoft2011american, hafner2017weak}. 

In the context of the SD model, \cite{buccheri2021continuous} explored the continuous-time limit of SD volatility models, obtaining a bivariate diffusion where the two Brownian motions are independent. While this result recover Nelson's limit in the case of normal density, it actually fails to capture the well-known Heston model. In the Heston model, the Brownian motions that drive returns and volatility are (negatively) correlated, a key feature that characterizes the leverage effect in the market.

In this paper, we investigate the continuous-time limit of QSD volatility models and also obtain a bivariate diffusion. However, unlike the result of the SD volatility models, the two Brownian motions in the bivariate diffusion can be correlated. It is shown that for a nondegenerate correlation coefficient, a necessary condition is that the distributions driving the innovation differ from that of computing the score, and at least one being asymmetric. Consequently, using QAML estimation based on the QSD model, we can recover the parameters of stochastic volatility models with leverage effects, particularly the correlation coefficient.

The paper is organized as follows. Section \ref{QSD volatility models} introduces the QSD model we are interested in. Section \ref{Main results} presents the main convergence results along with the analysis of the findings. Meanwhile, the proofs of main theorems are provided in Appendix. In Section \ref{Two examples}, we illustrate two specific examples of QSD volatility models, QSD-T and QSD-ST \citep[see][]{blasques2023quasi}, and explore some properties of their continuous-time limit through numerical simulation. Section \ref{QAML} presents Monte Carlo experiments for QAML estimation and filtering using the QSD model when the data-generating process (DGP) is a correlated volatility diffusion. Finally, Section \ref{Conclusion} concludes the paper.

\section{QSD volatility models}\label{QSD volatility models}

Let $\{y_n\}_{n\in\mb{N}}$ denote a time series of asset log returns, where $\mathcal{F}_n=\sigma(y_n,y_{n-1},\ldots, y_0)$ is the $\sigma$-algebra generated by $y$ up to time $n$. We assume that $\{y_n\}$ has the following form:
\begin{equation}\label{yeq}
    y_n=\sqrt{\varphi(\lambda_n)}\varepsilon_n, \quad \varepsilon_n|\mathcal{F}_{n-1}\stackrel{d}{\sim}f(\cdot,\Theta). 
\end{equation}
Here, $f(\cdot,\Theta)$ is a probability density function with a zero mean, and $\Theta$ represents the distribution's parameters \footnote{Hereafter, we omit $\Theta$ and treat probability density functions from the same family but with different parameters $\Theta_1, \Theta_2$ as distinct functions, denoted by $f_{\Theta_1}(\cdot)$, $f_{\Theta_2}(\cdot)$, or simply $f(\cdot), g(\cdot)$.}. Let $\lambda_n \in \mb{R}$ be a time-varying parameter, and in what follows, we will focus on designing the update rule that governs its evolution. The function $\varphi: \mb{R}\to\mb{R}^+$ is monotonic and differentiable, referred to as the link function. In this case, the conditional density of $y_n$ belongs to the class of scale family densities: 
\[p(y_n|\mathcal{F}_{n-1}; \lambda_n)=\frac{1}{\sqrt{\varphi(\lambda_n)}}f\left(\frac{y_n}{\sqrt{\varphi(\lambda_n)}}\right).\]
Equation \eqref{yeq}, together with the updating equation for $\lambda_n$, is commonly referred to as volatility models because, if $\varepsilon$ has unit variance, then $\sqrt{\varphi(\lambda_n)}$ behaves like the conditional volatility of $y_n$. Moreover, if $\varphi$ is the identity mapping restricted to $\mb{R}^+$, the time-varying parameter $\lambda_n$ is exactly conditional variance. Therefore, focusing on the QSD volatility model means restricting the conditional density of $y_n$ to the scale family densities.

The core of the QSD model lies in the time-varying parameter $\lambda_n$, which satisfies the following updating equation:
\begin{equation}\label{peq}
    \lambda_{n+1}=\omega+\beta\lambda_{n}+\alpha \psi(y_n,\lambda_n),
\end{equation}
where $\omega,\beta,\alpha\in\mb{R}$ are the parameters of equation. The key novelty in this expression is the term $\psi(y_n,\lambda_n)$, which equals $y_n^2$ in the GARCH model and $S(\lambda_{n})\nabla_{n}$ in the SD model, among others. In the SD model, the score $\nabla_{n}$ represents the partial derivative of the log conditional density of the observed innovations with respect to the parameter, i.e. 
\[\nabla_{n}=\frac{\partial \log p(y_n|\mathcal{F}_{n-1}; \lambda_n)}{\partial \lambda_n}.\] 
This term is analogous to the gradient in gradient descent algorithms. $S(\cdot)$ is a continuous function, referred to as the scaling function for the score. In order to interpret the curvature of the log density function, \cite{creal2013generalized} sets $S(\lambda_n)=[\mathbb{E}(\nabla_n^2|\mathcal{F}_{n-1})]^{-a}$, the inverse of the conditional Fisher information raised to a power. Common choices for $a$ are $a = 0, 1/2, 1$. 

In this paper, we refer to \cite{blasques2023quasi} and specify $\psi$ as a form more closely related to the SD model:
\begin{equation}\label{psi}
    \psi(y_n,\lambda_n)=S(\lambda_n)\frac{\partial \log q(y_n, \lambda_n)}{\partial \lambda_n},
\end{equation}
where $q(y_n, \lambda_n)$ is a scale family density function but not necessarily $p(y_n|\mathcal{F}_{n-1}; \lambda_n)$. More specifically, $q(y_n, \lambda_n)$ is a hypothetical conditional density of $y_n$, where $\varepsilon_n|\mathcal{F}_{n-1}$ in \eqref{yeq} follows a probability density $g(\cdot)$ rather than the true density $f(\cdot)$, i.e., 
\[q(y,\lambda)=\frac{1}{\sqrt{\varphi(\lambda)}}g\left(\frac{y}{\sqrt{\varphi(\lambda)}}\right).\]
We adopt the notation $\nabla_n$ for the score used in the SD model, and similarly denote $\frac{\partial \log q(y_n, \lambda_n)}{\partial \lambda_n}$ as $\nablaq_n$, called {\it quasi score}. 

The QSD volatility model studied in this paper is constructed by combining \eqref{yeq}, \eqref{peq}, \eqref{psi}, and using $X_n$ to represent the log price as in \eqref{garch}:
\begin{equation}\label{QSDmodel}
    \left\{\begin{aligned}
        &X_n=X_{n-1}+c\lambda_n+y_n,\\
        &\lambda_{n+1}=\omega+\beta\lambda_{n}+\alpha S(\lambda_n) \nablaq_n,
    \end{aligned}
    \right.
\end{equation}
where $y_n=\sqrt{\varphi(\lambda_n)}\varepsilon_n$ is the innovation of $X_n$, and $\varepsilon_n|\mathcal{F}_{n-1}\stackrel{d}{\sim}f(\cdot)$. Note that in \eqref{QSDmodel}, conditioned on $\lambda_n$, $\lambda_{n+1}$ is actually an $\mathcal{F}_n$-measurable random variable. Therefore, we define $Z_n=(X_n,\lambda_{n+1})$, and obtain a new $\sigma$-algebra $\mathcal{F}_{n}=\sigma(Z_n,Z_{n-1},\ldots, Z_0, \lambda_0)$. 

To study the continuous-time limit of $\{Z_n\}_{n \in \mathbb{N}}$, we begin by associating the discrete time indices in $\mathbb{N}$ with a time interval $h$, thereby defining the process $\{Z_{kh}^{(h)}\}_{k \in \mathbb{N}}$ which takes values at times $\{0, h, 2h, \ldots\}$. Next, by connecting these values in a stepwise manner, we construct a càdlàg process $\{Z^{(h)}_t\}_{t\geq 0}$, which is a random element taking values in $D$-space equipped with the Skorokhod topology. Finally, we consider the weak convergence of $\{Z^{(h)}_t\}$ as $h\to0$. The detailed steps of this procedure are outlined in the \ref{appendix1}.

Therefore, we first associate the QSD volatility model \eqref{QSDmodel} to the time interval $h$, resulting in a two-dimensional discrete-time Markov process $Z_{kh}^{(h)}=(X_{kh}^{(h)},\lambda_{(k+1)h}^{(h)})$, with $\sigma$-algebra $\mathcal{F}_{kh}=\sigma(Z^{(h)}_{kh}, Z^{(h)}_{(k-1)h},\ldots, Z^{(h)}_0, \lambda_0^{(h)})$. It follows that  
\begin{equation}\label{discrete-QSD}
    \left\{\begin{aligned}
        &X_{kh}^{(h)}=X_{(k-1)h}^{(h)}+c_h \lambda_{kh}^{(h)}+y_{kh}^{(h)},\\
        &\lambda_{(k+1)h}^{(h)}=\omega_{h}+\beta_{h}\lambda_{kh}^{(h)}+\alpha_{h}S(\lambda_{kh}^{(h)})\nablaq_{kh}^{(h)}.
    \end{aligned}
    \right.
\end{equation}
Where, 
\[y_{kh}^{(h)}=\sqrt{\varphi(\lambda_{kh}^{(h)})}\varepsilon_{kh}^{(h)}, \quad  h^{-1/2}\varepsilon_{kh}^{(h)}\big|\mathcal{F}_{(k-1)h}\stackrel{d}{\sim}f(\cdot),\]
and the quasi score
\[\nablaq_{kh}^{(h)}=\frac{\partial \log q(y_{kh}^{(h)}, \lambda_{kh}^{(h)})}{\partial \lambda_{kh}^{(h)}},\quad q(y,\lambda)=\frac{1}{\sqrt{\varphi(\lambda)h}}g\left(\frac{y}{\sqrt{\varphi(\lambda)h}}\right).\]
Recall that in our setting, $f$ is a probability density function with zero mean, while $g$ is another probability density function, which may differ from $f$. 

Following the same procedure detailed in the \ref{appendix1}, we obtain its continuous-time process $Z_t^{(h)}=(X_t^{(h)}, \lambda_t^{(h)})$ based on $Z_{kh}^{(h)}$, and denote $\mathcal{F}_{t}^{(h)}$ as its generated $\sigma$-algebra. 

\section{Main results}\label{Main results}
In this section, we derive the weak convergence limit of $Z_t^{(h)}$ as $h\to0$. The proof of the main result consists in an application of a general functional central limit theorem presented in \cite{stroock1997multidimensional}. Here, we utilize a simpler, although somewhat less general, version proposed in \cite{nelson1990arch}, which is retailed in the \ref{appendix2}. 

According to the Conditions \ref{con1} of Theorem \ref{corethm}, we need to examine the ratio of certain conditional moments of the process increments to $h$ as $h\to0$, and the most important term in the process is the quasi score $\nablaq$. We therefore begin by proving the following Lemma \ref{nablalem}.
\begin{lemma}\label{nablalem}
    For every $t\geq 0, l\in\mb{N}$, as $h\to0$, the moments $\mb{E}[(\nablaq_{t+h}^{(h)})^l|\mathcal{F}_{t}^{(h)}]=O(1)$, while $\mb{E}[\nablaq_{t+h}^{(h)}\varepsilon_{t+h}^{(h)}|\mathcal{F}_{t}^{(h)}]=O(h^{1/2})$.
\end{lemma}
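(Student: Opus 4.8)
The plan is to obtain a closed form for the quasi score $\nablaq_{t+h}^{(h)}$ by differentiating $\log q(y,\lambda)$ through the chain rule, and then to exploit a scaling cancellation that renders the conditional law of the quasi score independent of $h$. Writing $u = y/\sqrt{\varphi(\lambda)h}$ and $\ell_g := g'/g$, differentiation of $\log q(y,\lambda) = -\tfrac12\log(\varphi(\lambda)h) + \log g(u)$ with $y$ held fixed gives $\partial u/\partial\lambda = -\tfrac12\,u\,\varphi'(\lambda)/\varphi(\lambda)$ and hence
\[\nablaq_{t+h}^{(h)} = -\frac{1}{2}\frac{\varphi'(\lambda)}{\varphi(\lambda)}\bigl(1 + u\,\ell_g(u)\bigr).\]
The decisive observation is that when this is evaluated at the actual data $y = y_{t+h}^{(h)} = \sqrt{\varphi(\lambda)}\,\varepsilon_{t+h}^{(h)}$, the factors of $h$ cancel: since $h^{-1/2}\varepsilon_{t+h}^{(h)}\mid\mathcal{F}_t^{(h)}\sim f$, setting $\xi := h^{-1/2}\varepsilon_{t+h}^{(h)}$ we obtain $u = \xi$, with $\xi\sim f$ conditionally and not depending on $h$. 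The prefactor $\varphi'(\lambda)/\varphi(\lambda)$ depends only on the $\mathcal{F}_t^{(h)}$-measurable value $\lambda = \lambda_{t+h}^{(h)}$ and is $h$-free. Thus, conditionally on $\mathcal{F}_t^{(h)}$, the quasi score is a fixed deterministic multiple of the $h$-independent random variable $1 + \xi\,\ell_g(\xi)$.

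For the first claim, raising the displayed identity to the power $l$ and taking conditional expectation,
\[\mb{E}\bigl[(\nablaq_{t+h}^{(h)})^l\mid\mathcal{F}_t^{(h)}\bigr] = \Bigl(-\tfrac12\tfrac{\varphi'(\lambda)}{\varphi(\lambda)}\Bigr)^l \int_{\mb{R}}\bigl(1 + x\,\ell_g(x)\bigr)^l f(x)\,\dd x,\]
where I have used the conditional law $\xi\sim f$ to turn the expectation into an integral against $f$. Neither the prefactor nor the integral depends on $h$, so, granting finiteness of the integral, the whole quantity is constant in $h$ and therefore $O(1)$.

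For the second claim, I would use $\varepsilon_{t+h}^{(h)} = h^{1/2}\xi$ to write $\nablaq_{t+h}^{(h)}\varepsilon_{t+h}^{(h)} = -\tfrac{h^{1/2}}{2}\tfrac{\varphi'(\lambda)}{\varphi(\lambda)}\,\xi\bigl(1+\xi\,\ell_g(\xi)\bigr)$, so that
\[\mb{E}\bigl[\nablaq_{t+h}^{(h)}\varepsilon_{t+h}^{(h)}\mid\mathcal{F}_t^{(h)}\bigr] = -\frac{h^{1/2}}{2}\frac{\varphi'(\lambda)}{\varphi(\lambda)}\int_{\mb{R}} x\bigl(1+x\,\ell_g(x)\bigr) f(x)\,\dd x.\]
The explicit factor $h^{1/2}$ is pulled out, and the remaining integral is again $h$-free and finite (its first piece vanishes because $f$ has zero mean), which yields $O(h^{1/2})$.

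The differentiation and the algebra are not where the difficulty lies. The real content — and the main obstacle — is justifying finiteness of the integrals $\int (1+x\,\ell_g(x))^l f(x)\,\dd x$ for every $l$, together with $\int x(1+x\,\ell_g(x)) f(x)\,\dd x$. This is a joint integrability requirement linking the true innovation density $f$ to the log-derivative $\ell_g = g'/g$ of the (possibly different) density used to build the score: one must control the tail growth of $x\,g'(x)/g(x)$ against the tails of $f$. I expect this to be handled by imposing moment and regularity conditions on the pair $(f,g)$ as standing assumptions, and then verified directly in the concrete QSD-T and QSD-ST specifications of Section \ref{Two examples}.
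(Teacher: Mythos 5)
Your proposal is correct and follows essentially the same route as the paper: the same chain-rule computation yielding $\nablaq = -\tfrac{1}{2}\tfrac{\varphi'(\lambda)}{\varphi(\lambda)}\bigl(1+u\,g'(u)/g(u)\bigr)$, followed by the observation (phrased in the paper as the substitution $u=y/\sqrt{\varphi(\lambda)h}$ in the integral, in yours as the $h$-free conditional law of $\xi=h^{-1/2}\varepsilon$) that the scaling cancels, leaving an $h$-independent integral against $f$ for the $l$-th moment and an explicit $h^{1/2}$ factor for the cross moment. Your closing caveat about integrability matches the paper's treatment as well: the paper defers it via the qualifier ``if it exists'' in Remark \ref{remark_form} and the moment conditions of Assumption \ref{a2}.
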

\begin{proof}
    First, since the condition distributions of $\{\varepsilon_{kh}^{(h)}\}_{k\in \mb{N}}$ are independent of time, the two expectations above are independent of $t$. For simplicity, we omit superscripts and subscripts in the following statements\footnote{i.e., $y$ denotes $y_{t+h}^{(h)}$, $\nablaq$ denotes $\nablaq_{t+h}^{(h)}$, $\lambda$ denotes $\lambda_{t+h}^{(h)}$, $\varepsilon$ denotes $\varepsilon_{t+h}^{(h)}$, $\mathcal{F}$ denotes $\mathcal{F}_{t}^{(h)}$.}. 
    \[\begin{split}
        \nablaq&=\frac{\partial \log q(y, \lambda)}{\partial \lambda}=\frac{\partial}{\partial \lambda}\left[\log \frac{1}{\sqrt{\varphi(\lambda)h}}g\left(\frac{y}{\sqrt{\varphi(\lambda)h}}\right)\right]\\
        &=\frac{-\varphi'(\lambda)}{2\varphi(\lambda)}\left[1+\frac{g'\left(\frac{y}{\sqrt{\varphi(\lambda)h}}\right)}{g\left(\frac{y}{\sqrt{\varphi(\lambda)h}}\right)}\frac{y}{\sqrt{\varphi(\lambda)h}}\right].
    \end{split}\]
    Let $u=\frac{y}{\sqrt{\varphi(\lambda)h}}$, then, 
    \begin{equation}\label{moment}
        \mb{E}\left[\nablaq ^l|\mathcal{F}\right]=\left[\frac{-\varphi'(\lambda)}{2\varphi(\lambda)}\right]^l\int_{-\infty}^{\infty}\left[1+\frac{g'\left(u\right)}{g\left(u\right)}u\right]^l f\left(u\right)\dd u.
    \end{equation}
    Since $y_{t}^{(h)}=\sqrt{\varphi(\lambda_{t}^{(h)})}\varepsilon_{t}^{(h)}$, then $\varepsilon=\frac{y}{\sqrt{\varphi(\lambda)}}=\sqrt{h}u$, thus
    \begin{equation}\label{cmom}
        \mb{E}\left[\nablaq\varepsilon|\mathcal{F}\right]=\frac{-\varphi'(\lambda)}{2\varphi(\lambda)}\sqrt{h}\int_{-\infty}^{\infty}\left[1+\frac{g'\left(u\right)}{g\left(u\right)}u\right]u f\left(u\right)\dd u.
    \end{equation}
    Clearly, as $h\to0$, \eqref{moment} are $O(1)$ and \eqref{cmom} are $O(h^{1/2})$. 
\end{proof}
\begin{remark}
    It is evident from \eqref{moment} that the absolute moments $\mb{E}[|\nablaq|^l|\mathcal{F}]$ are also $O(1)$, thus $l$ can be extended to $\mb{R}^+$ when considering the absolute moments.
\end{remark}
\begin{remark}\label{remark_form}
    The proof also shows that $\mb{E}[\nablaq^l|\mathcal{F}]$ are functions of $\lambda$ and take the form 
    \begin{equation}\label{form}
        \mb{E}[\nablaq^l|\mathcal{F}]=m_l\left[\varphi'(\lambda)/\varphi(\lambda)\right]^l, 
    \end{equation}
    where 
    \begin{equation}\label{m}
        m_l=\frac{1}{(-2)^l}\int_{-\infty}^{\infty}\left[1+\frac{g'(u)}{g(u)}u\right]^l f(u)\dd u. 
    \end{equation}
    which is an $l$-related constant, if it exists. 
\end{remark}

In the case of $g=f$, the integral
\[\int_{-\infty}^{\infty}\left[1+\frac{f'(u)}{f(u)}u\right] f(u)\dd u=\int_{ -\infty}^{\infty}f(u)+uf'(u)\dd u=1+uf(u)\big|_{-\infty}^{\infty}-1=0, 
\]
so that $\mb{E}[\nabla|\mathcal{F}]=0$ in the SD model, but it may not hold for many cases where $g\neq f$. On the other hand, the score can be regarded as the innovation of the time-varying parameter, which should be ``fair'' in a certain sense. Therefore, we first consider the special case where $\mb{E}[\nablaq|\mathcal{F}]=0$, as the results in this case are relatively concise and the proof procedure is fundamental. Subsequently, we address the general case where $\mb{E}[\nablaq|\mathcal{F}]\neq 0$. 

Before commencing our analysis, we require the following assumptions to guarantee that $Z_t^{(h)}$ converges to a nondegenerate diffusion. 

\begin{assumption}[Rate of scaling]\label{a1}
    There exist $c, \omega, \theta \in\mb{R}$, $\alpha\in \mb{R}\backslash\{0\}$ such that $h^{-1}c_h\to c$, $h^{-1}\omega_h\to\omega$, $h^{-1}(1-\beta_h)\to\theta$, $h^{-1/2}\alpha_h\to\alpha$ as $h\to0$.
\end{assumption}
\begin{assumption}[Existence of some moments]\label{a2}
    Let $U$ be a random variable with probability density $f$. We assume that 
    \[\mb{E}[U^2g'(U)/g(U)]=\rho<\infty, \]
    and there exists a $\delta>0$ such that 
    \[\mb{E}|U|^{2+\delta}<\infty, \quad \mb{E}[|\nablaq|^{2+\delta}|\mathcal{F}]<\infty.\]
    This implies that the second moment are finite and we assume $\mb{E}(U^2)=\eta>0$, $\mb{E}[(\nablaq)^2|\mathcal{F}]=\gamma(\lambda)>0$.
\end{assumption}
As will be demonstrated in the subsequent analysis, $\rho$ plays a pivotal role. At this point, we would like to highlight two specific cases where $\rho$ vanishes. Firstly, when $g = f$, which corresponds to the case of the SD model, we have
\[\rho=\mb{E}[U^2f'(U)/f(U)]=\int_{-\infty}^{\infty}u^2f'(u)\dd u=u^2f\big|_{-\infty}^{\infty}-2\int_{-\infty}^{\infty}uf(u)\dd u=0.\]
Moreover, under the condition that both $f$ and $g$ are symmetric distributions, then it follows that $u^2 g'(u)f(u)/g(u)$ is an odd function, and consequently, $\rho = 0$.

\begin{assumption}[Continuity]\label{a3}
    There exists a compact set $\Gamma\subset\mb{R}$ such that $S(\lambda)\varphi'(\lambda)/\varphi(\lambda)$ is continuous on $\Gamma$. 
\end{assumption}

Note that we suppose $\eta$, $\alpha$, $\gamma(\lambda)$ are all non-zero in assumptions. Because $\eta=0$ implies that the variance of the innovation is zero, and $\alpha=0$ or $\gamma(\lambda)=0$, as we will see later, leads to a degenerate SDE, both cases are trivial. In fact, according to representation \eqref{form}, $\gamma(\lambda)>0$ is equivalent to the link function $\varphi(\cdot)$ being strictly monotonic. This condition is satisfied by common choices such as $\varphi(x)=x$ or $\varphi(x)=\ee^{x}$.

\subsection{Quasi score with zero mean}
In this part, we assume that $f$ and $g$ are two probability density functions such that $\mb{E}[\nablaq|\mathcal{F}]=0$, i.e. the integral $\int_{-\infty}^{\infty}[1+\frac{g'(u)}{g(u)}u] f(u)\dd u=0$. 

\begin{theorem}\label{main thm}
    Under assumptions \ref{a1},\ref{a2},\ref{a3} and $\mb{E}[\nablaq|\mathcal{F}]=0$. If $4\eta m_2-\rho^2>0$ and there exists a random variable $Z_0$ with probability measure $\nu_0$ such that $Z_0^{(h)}\tod Z_0$ as $h\to0$, then $Z_t^{(h)}=\left(X_t^{(h)}, \lambda_{t+h}^{(h)}\right)$ weakly converges to the following It\^{o} SDE's unique weak solution $Z_t=(X_t,\lambda_t)\in \mb{R}\times \Gamma$ as $h\to0$:
    \begin{equation}\label{QSD-sde}
        \left\{\begin{aligned}
            &\dd X_t=c\lambda_t\dd t+\sqrt{\varphi(\lambda_t)\eta}\dd W_t^{(1)},\\
            &\dd \lambda_t=(\omega-\theta\lambda_t)\dd t+\alpha S(\lambda_t)\sqrt{\gamma(\lambda_t)}\dd W_t^{(2)},\\
            &\mbox{\rm Cov}\left[\dd W_t^{(1)},\dd W_t^{(2)}\right]=-\frac{\rho}{2\sqrt{\eta m_2}}\dd t,\\
            &\mb{P}(Z_0\in B)=\nu_0(B),  \quad \mbox{for any} \ B\in\mathcal{B}(\mb{R}\times \Gamma).
        \end{aligned}\right.
    \end{equation}
    Where $W_t^{(1)}, W_t^{(2)}$ are two (potentially) correlated standard Brownian motions.
\end{theorem}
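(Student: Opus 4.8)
The plan is to apply the functional central limit theorem of \cite{nelson1990arch} (Theorem \ref{corethm} in \ref{appendix2}), so that the whole proof reduces to verifying its hypotheses (Condition \ref{con1}) for the Markov chain $\{Z_{kh}^{(h)}\}$, together with the well-posedness of \eqref{QSD-sde} and the assumed initial convergence $Z_0^{(h)}\tod Z_0$. Concretely, I would write the one-step increment as $Z_{(k+1)h}^{(h)}-Z_{kh}^{(h)}=(\Delta X,\Delta\lambda)$ with $\Delta X=c_h\lambda+\sqrt{\varphi(\lambda)}\,\varepsilon$ and $\Delta\lambda=\omega_h-(1-\beta_h)\lambda+\alpha_h S(\lambda)\nablaq$, where $\lambda:=\lambda_{(k+1)h}^{(h)}$ is $\mathcal{F}_{kh}$-measurable and $\varepsilon,\nablaq$ carry the fresh innovation. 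The task is then to show that $h^{-1}$ times the conditional mean increment converges to the drift of \eqref{QSD-sde}, that $h^{-1}$ times the conditional second-moment matrix converges to its diffusion matrix, and that a Lindeberg-type bound holds.

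For the drift I would substitute $\mb{E}[\varepsilon|\mathcal{F}]=\sqrt{h}\,\mb{E}[U]=0$ and the zero-mean hypothesis $\mb{E}[\nablaq|\mathcal{F}]=m_1[\varphi'(\lambda)/\varphi(\lambda)]=0$ from Remark \ref{remark_form}; the surviving terms $h^{-1}c_h\lambda$ and $h^{-1}\omega_h-h^{-1}(1-\beta_h)\lambda$ converge to $c\lambda$ and $\omega-\theta\lambda$ by the scaling rates of Assumption \ref{a1}. For the diagonal of the diffusion matrix I would use $\mb{E}[\varepsilon^2|\mathcal{F}]=h\eta$ and $\mb{E}[\nablaq^2|\mathcal{F}]=\gamma(\lambda)$ from Assumption \ref{a2} and \eqref{form}; the $\mathcal{F}$-measurable drift pieces contribute only $O(h^2)$, and since $c_h^2=O(h^2)$ while $\alpha_h^2\sim\alpha^2 h$, the limits are $\varphi(\lambda)\eta$ and $\alpha^2 S(\lambda)^2\gamma(\lambda)$, matching the squared diffusion coefficients in \eqref{QSD-sde}.

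The essential step is the off-diagonal (cross) term, which produces the leverage correlation. Here I would expand $\mb{E}[\Delta X\,\Delta\lambda|\mathcal{F}]$, discard the $\mathcal{F}$-measurable and mean-zero pieces, and retain $\sqrt{\varphi(\lambda)}\,\alpha_h S(\lambda)\,\mb{E}[\varepsilon\nablaq|\mathcal{F}]$. Using \eqref{cmom} and $\mb{E}[U^2 g'(U)/g(U)]=\rho$ gives $\mb{E}[\varepsilon\nablaq|\mathcal{F}]=-\tfrac{\rho}{2}\sqrt{h}\,\varphi'(\lambda)/\varphi(\lambda)$, so after dividing by $h$ and letting $h\to0$ (with $h^{-1/2}\alpha_h\to\alpha$) the covariance rate tends to $-\tfrac{\rho\alpha}{2}S(\lambda)\varphi'(\lambda)/\sqrt{\varphi(\lambda)}$. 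Dividing this by the product of the two diffusion coefficients and substituting $\gamma(\lambda)=m_2[\varphi'(\lambda)/\varphi(\lambda)]^2$ collapses all $\lambda$-dependence and yields exactly the constant correlation $-\rho/(2\sqrt{\eta m_2})$ claimed in \eqref{QSD-sde}. The hypothesis $4\eta m_2-\rho^2>0$ is precisely what keeps this correlation inside $(-1,1)$: computing the determinant of the limiting diffusion matrix shows it equals $\alpha^2 S(\lambda)^2[\varphi'(\lambda)^2/\varphi(\lambda)](4\eta m_2-\rho^2)/4$, so the same inequality guarantees the limit is a non-degenerate diffusion.

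It remains to verify the Lindeberg condition and the well-posedness of \eqref{QSD-sde}. For the former, since $\varepsilon=\sqrt{h}\,U$ and $\alpha_h S(\lambda)\nablaq=O(\sqrt{h})$, the $(2+\delta)$-th conditional moments of $\Delta X$ and $\Delta\lambda$ are $O(h^{1+\delta/2})$, so $h^{-1}$ times them vanishes; this is exactly where the bounds $\mb{E}|U|^{2+\delta}<\infty$ and $\mb{E}[|\nablaq|^{2+\delta}|\mathcal{F}]<\infty$ of Assumption \ref{a2} enter. For well-posedness I would invoke the continuity supplied by Assumption \ref{a3} on the compact state space $\Gamma$ together with the non-degeneracy just established to conclude uniqueness of the weak solution, the remaining hypothesis of Theorem \ref{corethm}. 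I expect the main obstacle to be this last point---establishing uniqueness of the weak solution and, relatedly, justifying that the diffusion lives on $\mb{R}\times\Gamma$ (i.e.\ controlling the behaviour of $\lambda_t$ at the boundary of $\Gamma$)---rather than the moment asymptotics, which follow mechanically from Lemma \ref{nablalem} and \eqref{cmom}.
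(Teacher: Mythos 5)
Your proposal is correct and follows essentially the same route as the paper's proof: you verify the four conditions of Theorem \ref{corethm} using the moment asymptotics of Lemma \ref{nablalem} and \eqref{cmom} for the drift, diffusion, cross term, and Lindeberg bound, with $4\eta m_2-\rho^2>0$ supplying exactly the positive definiteness and uniform ellipticity on the compact $\Gamma$ that the paper uses for weak uniqueness. The only cosmetic difference is that you read the correlation $-\rho/(2\sqrt{\eta m_2})$ directly off the limiting covariance matrix, whereas the paper first constructs an explicit Cholesky factor $\sigma(z,t)$ and then transforms the two independent Brownian motions into correlated ones---the underlying computation is identical.
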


\begin{proof}
    See Appendix \ref{appendixB}. 
\end{proof}

\begin{remark}
    In the theorem, aside from the non-zero constraint imposed on $\alpha$, no restrictions are placed on $\omega$ and $\theta$. Indeed, the convergence result holds for arbitrary $\omega$ and $\theta$. However, as a volatility process, the limiting SDE requires $\omega, \theta>0$ from an economic and practical perspective to ensure that it is mean-reverting and has a positive long-term mean.
\end{remark}

In our proof, there are two distinctions from the approaches of \cite{buccheri2021continuous}. First, our assumptions are more relaxed. Specifically, we do not require the fourth moment of the innovation and score to be finite, but only need the existence of $\delta$ such that the $2+\delta$-th moment is finite. In their proof, by directly computing 
\[h^{-1}\mb{E}[(\lambda_{t+h}^{(h)}-\lambda_{t}^{(h)})^4|\mathcal{F}_{t}^{(h)}]=h\alpha^4 S(\lambda_{t+h}^{(h)})^4\mb{E}[(\nabla_{t+h}^{(h)})^4|\mathcal{F}_t^{(h)}]+O(h^\gamma), \ \gamma\geq 3/2,\]
the finiteness of the fourth moment ensures the existence of $\delta=2$ such that $c_{h,i,\delta}(x,t)$ tends to 0 uniformly. However, in fact, by applying Jensen's inequality to bound
\[h^{-1}\mb{E}[|\lambda_{t+h}^{(h)}-\lambda_{t}^{(h)}|^{2+\delta}|\mathcal{F}_{t}^{(h)}]\lesssim \frac{3^{1+\delta}}{h}\left|\alpha_{h}S(\lambda_{t+h}^{(h)})\right|^{2+\delta}\mb{E}\left(\left|\nablaq_{t+h}^{(h)}\right|^{2+\delta}|\mathcal{F}_t^{(h)}\right),\]
the existence of the $2+\delta$-th moment is enough (For details, please refer to Appendix \ref{appendixB}). Second, in \cite{buccheri2021continuous}, the existence and uniqueness of the limiting SDE are discussed only for two specific link functions, whereas in this paper, we provide a direct proof within the theorem itself.

Next, we consider a more general case. Note that when  calculating the drift term of $\lambda_t$ in the proof , the key to the limit
    \[\lim_{h\to0}\frac{\alpha_{h}S(\lambda_{t+h}^{(h)})\mb{E}[\nablaq_{t+h}^{(h)}|\mathcal{F}_{t}^{(h)}]}{h}=0\]
rests on the assumption that $\mb{E}[\nablaq_{t+h}^{(h)}|\mathcal{F}_{t}^{(h)}]=0$. Otherwise, it will tend to infinity due to $\alpha_h=O(h^{1/2})$. Therefore, a new scaling form of the QSD volatility model is required when the quasi-score has a non-zero mean. 

\subsection{Quasi score with non-zero mean}
According to Remark \ref{remark_form}, the conditional mean of quasi score has the form $m_1\varphi'(\lambda)/\varphi(\lambda)$. Thus in this part, we consider the case where $m_1=\mu\neq 0$. To address such cases, we can extract the mean value from $\nablaq$ and incorporate it into the drift term. Specifically, we rewrite the updating equation for $\lambda_n$ as follows:
\[\begin{split}
    \lambda_{n+1}&=\omega+\beta\lambda_{n}+\alpha S(\lambda_{n})\nablaq_{n}\\
    &=\omega+\frac{\mu\alpha S(\lambda_{n})\varphi'(\lambda_{n})}{\varphi(\lambda_{n})}+\beta\lambda_{n}+\alpha S(\lambda_{n})\left[\nablaq_{n}-\frac{\mu\varphi'(\lambda_{n})}{\varphi(\lambda_{n})}\right]\\
    &=\omega+\frac{\mu\alpha S(\lambda_{n})\varphi'(\lambda_{n})}{\varphi(\lambda_{n})}+\beta\lambda_{n}+\alpha S(\lambda_{n})\widetilde{\nablaq}_{n}.
\end{split}
\]
Relating this to the time interval $h$, we obtain
\begin{equation}\label{new-discrete-QSD}
    \left\{\begin{aligned}
        &X_{kh}^{(h)}=X_{(k-1)h}^{(h)}+c_h \lambda_{kh}^{(h)}+y_{kh}^{(h)},\\
        &\lambda_{(k+1)h}^{(h)}=\omega_{h}+\frac{\mu_h\alpha_h S(\lambda_{kh}^{(h)})\varphi'(\lambda_{kh}^{(h)})}{\varphi(\lambda_{kh}^{(h)})}+\beta_{h}\lambda_{kh}^{(h)}+\alpha_{h}S(\lambda_{kh}^{(h)})\widetilde{\nablaq}_{kh}^{(h)},
    \end{aligned}
    \right.
\end{equation}
and its continuous-time version $Z_t^{(h)}=(X_t^{(h)}, \lambda_{t+h}^{(h)})$ with $\sigma$-algebra $\mathcal{F}_t^{(h)}$. 
\begin{theorem}\label{main thm2}
    Under assumptions \ref{a1},\ref{a2},\ref{a3} and we further assume $h^{-1/2}\mu_h\to \mu$, $4\eta (m_2-\mu^2)-\rho^2>0$ and there exists a random variable $Z_0$ with probability measure $\nu_0$ such that $Z_0^{(h)}\tod Z_0$ as $h\to0$, then $Z_t^{(h)}=\left(X_t^{(h)}, \lambda_{t+h}^{(h)}\right)$ weakly converges to the following It\^{o} SDE's unique weak solution $Z_t=(X_t,\lambda_t)\in \mb{R}\times \Gamma$ as $h\to0$:
    \begin{equation}\label{new-QSD-sde}
        \left\{\begin{aligned}
            &\dd X_t=c\lambda_t\dd t+\sqrt{\varphi(\lambda_t)\eta}\dd W_t^{(1)},\\
            &\dd \lambda_t=\left[\omega-\theta\lambda_t+\frac{\alpha\mu S(\lambda_t)\varphi'(\lambda_t)}{\varphi(\lambda_t)}\right]\dd t+\alpha \sqrt{m_2-\mu^2}\frac{S(\lambda_t)\varphi'(\lambda_t)}{\varphi(\lambda_t)}\dd W_t^{(2)},\\
            &\mbox{\rm Cov}\left[\dd W_t^{(1)},\dd W_t^{(2)}\right]=-\frac{\rho}{2\sqrt{\eta (m_2-\mu^2)}}\dd t, \\
            &\mb{P}(Z_0\in B)=\nu_0(B),  \quad \mbox{for any} \ B\in\mathcal{B}(\mb{R}\times \Gamma).
        \end{aligned}\right.
    \end{equation}
    Where $W_t^{(1)}, W_t^{(2)}$ are two (potentially) correlated standard Brownian motions.
\end{theorem}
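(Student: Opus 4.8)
The plan is to reduce Theorem \ref{main thm2} to the already-established Theorem \ref{main thm} by exploiting the centering built into \eqref{new-discrete-QSD}. Writing $\widetilde{\nablaq}_{kh}^{(h)}=\nablaq_{kh}^{(h)}-\mu\varphi'(\lambda_{kh}^{(h)})/\varphi(\lambda_{kh}^{(h)})$, Remark \ref{remark_form} (with $m_1=\mu$) gives $\mb{E}[\widetilde{\nablaq}|\mathcal{F}]=m_1\varphi'/\varphi-\mu\varphi'/\varphi=0$, so the centered driving term has conditional mean zero exactly as $\nablaq$ did in the zero-mean case. The whole proof therefore consists in re-verifying Conditions \ref{con1}--\ref{con4} of Theorem \ref{corethm} for the process \eqref{new-discrete-QSD}, tracking only the two genuinely new ingredients: the explicit drift correction $\mu_h\alpha_h S(\lambda)\varphi'(\lambda)/\varphi(\lambda)$ and the replacement of the second moment $\mb{E}[(\nablaq)^2|\mathcal{F}]$ by the variance $\mb{E}[(\widetilde{\nablaq})^2|\mathcal{F}]$.

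For the drift (Condition \ref{con1}.\eqref{con11}) I would compute $h^{-1}\mb{E}[\lambda_{t+2h}^{(h)}-\lambda_{t+h}^{(h)}|\mathcal{F}_t^{(h)}]$. Because $\mb{E}[\widetilde{\nablaq}|\mathcal{F}]=0$, the stochastic term drops out and only the deterministic pieces survive. The new piece $h^{-1}\mu_h\alpha_h S(\lambda)\varphi'(\lambda)/\varphi(\lambda)$ converges, under the factorisation $h^{-1}\mu_h\alpha_h=(h^{-1/2}\mu_h)(h^{-1/2}\alpha_h)\to\mu\alpha$ guaranteed by Assumption \ref{a1} together with the new scaling $h^{-1/2}\mu_h\to\mu$, to the extra drift $\alpha\mu S(\lambda)\varphi'(\lambda)/\varphi(\lambda)$ in \eqref{new-QSD-sde}. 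This is precisely the step that fails without centering: the un-centered drift contains $h^{-1}\alpha_h S(\lambda)\mb{E}[\nablaq|\mathcal{F}]=O(h^{-1/2})$, which diverges, so the centering is not cosmetic but essential.

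For the diffusion matrix (Condition \ref{con1}.\eqref{con12}) the key new computation is, via \eqref{form}, $\mb{E}[(\widetilde{\nablaq})^2|\mathcal{F}]=\mb{E}[(\nablaq)^2|\mathcal{F}]-\mu^2(\varphi'/\varphi)^2=(m_2-\mu^2)(\varphi'/\varphi)^2$, which produces the $(2,2)$ entry $\alpha^2 S^2(\lambda)(m_2-\mu^2)(\varphi'/\varphi)^2$ and hence the diffusion coefficient $\alpha\sqrt{m_2-\mu^2}\,S(\lambda)\varphi'(\lambda)/\varphi(\lambda)$. The off-diagonal entry is unchanged: since $\mb{E}[\varepsilon|\mathcal{F}]=0$, the centering leaves $\mb{E}[\varepsilon\widetilde{\nablaq}|\mathcal{F}]=\mb{E}[\varepsilon\nablaq|\mathcal{F}]$, which Lemma \ref{nablalem} already controls as $O(h^{1/2})$ with limiting contribution governed by $\rho$. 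The explicit drift correction is $O(h)$, so it contributes nothing to the quadratic variation at order $h$, and its cross term with $\widetilde{\nablaq}$ again vanishes because $\mb{E}[\widetilde{\nablaq}|\mathcal{F}]=0$. The Lyapunov-type bound (Condition \ref{con1}.\eqref{con13}) transfers verbatim once one notes that $\mb{E}[|\widetilde{\nablaq}|^{2+\delta}|\mathcal{F}]<\infty$ follows from Assumption \ref{a2} and the constant shift.

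The remaining conditions parallel Theorem \ref{main thm}. The positive definiteness needed for the Cholesky factor $\sigma(z,t)$ (Condition \ref{con2}) and for uniqueness of the weak solution (Condition \ref{con4}) reduces to positivity of $\det a=\alpha^2 S^2(\lambda)\,[\varphi'(\lambda)]^2/\varphi(\lambda)\,[\eta(m_2-\mu^2)-\rho^2/4]$, which is exactly where the sharpened hypothesis $4\eta(m_2-\mu^2)-\rho^2>0$ enters; the uniform-positive-definiteness argument on the compact set $\Gamma$ then copies the one around \eqref{first-order}--\eqref{second-order}, and the explicit Cholesky entries follow from \eqref{sigma} with $m_2$ replaced by $m_2-\mu^2$. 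Finally, rewriting the two-independent-Brownian-motion form in terms of correlated $W^{(1)},W^{(2)}$ and recomputing the correlation as in Theorem \ref{main thm} yields $\mbox{\rm Cov}[\dd W_t^{(1)},\dd W_t^{(2)}]=-\rho/(2\sqrt{\eta(m_2-\mu^2)})\,\dd t$. I expect no serious obstacle, only careful bookkeeping; the single delicate point is confirming that all of Lemma \ref{nablalem}'s order estimates survive the centering, in particular that $\mb{E}[\varepsilon\widetilde{\nablaq}|\mathcal{F}]$ stays $O(h^{1/2})$, since this is what keeps the off-diagonal of $a$ finite and the limiting correlation well defined.
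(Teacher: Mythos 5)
Your proposal is correct and takes essentially the same route as the paper's own proof: center the quasi score so that $\mb{E}[\widetilde{\nablaq}|\mathcal{F}]=0$, re-verify Conditions \ref{con1}--\ref{con4} with the new drift limit $h^{-1}\mu_h\alpha_h\to\mu\alpha$, the variance $\mb{E}[(\widetilde{\nablaq})^2|\mathcal{F}]=(m_2-\mu^2)[\varphi'(\lambda)/\varphi(\lambda)]^2$, and the unchanged cross moment $\mb{E}[\varepsilon\widetilde{\nablaq}|\mathcal{F}]=\mb{E}[\varepsilon\nablaq|\mathcal{F}]$, then recompute the correlation. The paper carries out exactly this bookkeeping, including your $(2+\delta)$-moment bound for $\widetilde{\nablaq}$ and the positivity of the second-order principal minor under $4\eta(m_2-\mu^2)-\rho^2>0$.
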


\begin{proof}
    See Appendix \ref{appendixC}. 
\end{proof}

Comparing the continuous-time limits \eqref{QSD-sde} and \eqref{new-QSD-sde}, we find that \eqref{QSD-sde} corresponds exactly to the special case of $\mu=0$ in \eqref{new-QSD-sde}. From the results of the two theorems, we can find that the most significant distinction between the continuous-time limit of the QSD and the SD model manifests in the correlated nature of the two Brownian motions. We now proceed to analyze this phenomenon. 

\subsection{QSD model as a general framework for correlation emergence}
In this part, we analyze the correlation coefficient between the two Brownian motions in the continuous-time limit and it reveals that the QSD model provides a general framework for generating such correlation. 

It is evident that in both theorems, $\rho$ serves as a crucial quantity governing the correlation coefficient, i.e.
\begin{equation}\label{formrho}
    \mbox{\rm Cov}\left[\dd W_t^{(1)},\dd W_t^{(2)}\right]=-\frac{\rho}{2\sqrt{\eta (m_2-\mu^2)}}\dd t.
\end{equation}  
We use the word ``potentially'' in the statements because there are two cases, as previously discussed under Assumption \ref{a2}, will cause $\rho=0$, which indicates that the two Brownian motions are independent. Specifically, when $g=f$, the continuous-time limit recovers the result of \cite{buccheri2021continuous} about SD volatility models. Furthermore, when $g$ is the density of the standard normal distribution and $f$ is symmetric, it recovers the result of \cite{nelson1990arch} about general GARCH models. We conclude it as the following corollary: 
\begin{corollary}\label{coro}
    The two Brownian motions in the continuous-time limit of QSD volatility models can be correlated only if the distributions driving the innovation and computing the score are different, and at least one of which is asymmetric. 
\end{corollary}

It is worth noting that in the limit of GARCH or more generally SD volatility models, the transition from a single to two independent sources of randomness is quite surprising. In fact, the above corollary illustrates that this phenomenon can arise not only within SD framework but also due to the symmetry of the distribution, with GARCH model employing Gaussian innovations lying at the intersection of both. The QSD volatility model provides a bridge for this transition. Specifically, by adjusting $f$ and $g$, the correlation between these two sources of randomness can be modulated, and it is even possible to revert to a single source of randomness when $\rho=1$.

Indeed, the emergence of correlated Brownian motions in the continuous-time limit does not constitute a novel discovery unique to this paper. \cite{trifi2006issues} has previously analyzed the general GARCH-M model where the innovation term follows an arbitrary $D(0,1)$ distribution, demonstrating that the correlation coefficient is determined by the skewness of the innovation. Furthermore, certain models incorporating asymmetric terms into the time-varying parameters equation and deriving the volatility process through specific mappings, such as EGARCH \citep{nelson1991conditional} and CEV-ARCH \citep{fornari2006approximating}. These models are also capable of generating correlated Brownian motions \citep{nelson1990arch,trifi2006issues}. However, it should be pointed out that they can also be encompassed within the framework of the QSD model for comprehensive analysis. In the following analysis, we elucidate this point. To maintain generality, we do not specify that $f\sim N(0,1)$, but only require that it belongs to the class of distributions which has zero mean.

\subsubsection{GARCH-M model}

When $\varphi$ is identity mapping and $g$ is restricted to the density of the standard normal distribution, we obtain the general GARCH-M model. In this context, 
\[\rho=\mb{E}_{f}[U^2g'(U)/g(U)]=-\mb{E}_f[U^3],\]
which recovers the result of \cite{trifi2006issues} and this quantity obviously vanishes when $f$ is symmetric. 

\subsubsection{EGARCH model}

In the EGARCH model, the dynamic of $\ln \sigma_t^2$ is characterized, with its asymmetric term takes the form
\begin{equation}\label{EGARCH}
    \psi(\varepsilon)=\theta \varepsilon+\gamma [|\varepsilon|-\mb{E}|\varepsilon|].
\end{equation}

Within the framework of the QSD model, we can obtain the EGARCH model by selecting $\varphi(x)=\ee^x$ and 
\[g(x)=\frac{(\theta+\gamma)^2}{2}|x|\ee^{-\theta x-\gamma|x|}.\]
It can be verified that, after proper integration of the constants, \eqref{EGARCH} corresponds to the standardized score function of density $q(y,\lambda)=\ee^{-\lambda/2}g(y\ee^{-\lambda/2})$. Note that $g(x)$ is asymmetric except when $\theta = 0$, and this asymmetry gives rise to the emergence of correlation. Specifically, 
\[\rho=\mb{E}_f[U^2(\theta+\text{sgn}(U)\gamma+U^{-1})]=\theta+\gamma\mb{E}_f[\text{sgn}(U)U^2].\]
Furthermore, if $f$ is symmetric, then $\rho=\theta$. 

In fact, through direct observation of \eqref{EGARCH}, it can be seen that $\psi$ is precisely a linear combination of $\varepsilon$ and $|\varepsilon|-\mb{E}|\varepsilon|$, and these two components are uncorrelated when $f$ is symmetric. By Donsker's invariance principle, they weakly converge to a two-dimensional Brownian motion. Therefore, the perturbation of the limiting process is essentially synthesized from two independent Brownian motions, with the correlation determined by the coefficients $\theta$. When $\theta=0$, the process is entirely driven by the second independent Brownian motion.

\subsubsection{CEV-ARCH model}

The CEV-ARCH model is capable of approximating any CEV-diffusion model for stochastic volatility. Referring to Eq. (14) in \citet{fornari2006approximating}, the dynamic related to volatility is given by 
\[\sigma_{n+1}^{\delta}=\omega+\beta \sigma_{n}^{\delta}+\alpha |\varepsilon_n|^{\delta}(1-\gamma\text{sgn}(\varepsilon_n))^{\delta}\sigma_{n}^{\delta},\]
with $\delta>0, \gamma\in (-1,1)$. 

Under the assumption that $\varepsilon$ follows a Generalized Error Distribution (GED), \cite{fornari2006approximating} derived an expression for the correlation coefficient $\rho$ of the limiting CEV process, as presented in their Eq. (16). 

Similarly, within the framework of the QSD model, we can obtain the CEV-ARCH model by selecting $\varphi(x)=x^{2/\delta}$ and 
\[g(x)=\frac{(1-\gamma^2)\delta^{1-1/\delta}}{2\Gamma(1/\delta)}\exp\left[-\frac{(1-\gamma\text{sgn}(x))^\delta |x|^\delta}{\delta}\right].\]
Interestingly, while $g(x)$ bears resemblance to the density of GED with parameter $\delta$, it distinguishes itself through its asymmetric structure, where the weights on the positive and negative components are $(1 - \gamma)^\delta$ and $(1 + \gamma)^\delta$, respectively. Therefore, $g$ can be characterized as an asymmetric GED, where $\gamma\in (-1,1)$ is a skewness parameter. With $\gamma = 0$, the distribution reduces to the GED. 

Therefore, the CEV-ARCH model is essentially a QSD volatility model driven by the asymmetric GED\footnote{One may observe that the parameter $\delta$ in $g$ must coincide with that in the link function. However, in the Eq. (21) of \cite{fornari2006approximating}, they proposed a more generalized form, where the asymmetric term's parameter becomes $\delta\eta$. This modification liberates the parameter of the asymmetric GED, which can be adjusted through $\eta$.}. This asymmetry naturally accounts for the emergence of correlation in the CEV-ARCH model. Specifically, 
\[\rho=\frac{1}{\delta}\mb{E}_f[\text{sgn}(U)(1-\gamma\text{sgn}(U))^\delta |U|^{\delta+1}].\]
In the case of $\gamma=0$, $\rho=\delta^{-1}\mb{E}_f[\text{sgn}(U)|U|^{\delta+1}]$. Obviously, it vanishes when $f$ is symmetric. 

From the preceding analysis and main theorems, we can observe that the correlation is actually independent of the link function but solely determined by the distributions $f$ and $g$. Consequently, for models such as EGARCH, the emergence of correlation is not attributed to the transformation of volatility but rather to the incorporated asymmetric term, which essentially stems from the asymmetric density function $g$. 

Although Corollary \ref{coro} is necessary, it is nearly sufficient as well. Because, in the context of distinct and asymmetric density functions $f$ and $g$, constructing cases where the integral $\int_{-\infty}^{\infty}u^2 g'(u)f(u)/g(u)\dd u=0$ is nontrivial. Therefore, it is imperative to point out that the results of Corollary \ref{coro} are insightful and general, which is to some extent attributed to the generality of the QSD model.

\section{Two examples based on symmetric and asymmetric distributions}\label{Two examples}

In this section, we consider two examples of QSD volatility model: one of which is driven by Student’s t-distribution (thus encompassing the normal as a limiting case), and another is driven by skewed Student’s t-distribution of \cite{azzalini2003distributions}. As Corollary \ref{coro} indicates, correlation arises only in asymmetric cases. Thus, we consider the diffusion coefficient of continuous-time limit for the former and the correlation coefficient for the latter.

Here, we focus on the case where $\varphi(x)=x$, implying that the conditional variance $\sigma_t^2$ is treated as the time-varying parameter $\lambda_t$. Alternatively, one can also easily set $\varphi(x)=\ee^{x}$ to model the dynamics of $\ln \sigma_t^2$ or $\varphi(x)=x^{2/\delta}$ to model, just like in the EGARCH-type models. 

\subsection{The QSD-T model}

When considering the QSD volatility model based on two standardized Student's t-distribution with different degrees of freedom $v_1, v_2$, we obtain a model called QSD-T,
\begin{equation}\label{QSD-T}
    \begin{split}
        &y_n=\sigma_n\varepsilon_n, \quad \varepsilon_n\stackrel{i.i.d.}{\sim}t_{v_1}\\
        &\sigma^2_{n+1}=\omega+\beta \sigma^2_{n}+\alpha \sigma^2_{n}\left[\frac{(v_2+1)y_n^2}{(v_2-2)\sigma_n^2+y_n^2}-1\right].
    \end{split}
\end{equation}
This model encompasses many common models, such as GARCH, t-GARCH, Beta-t \citep{harvey2008beta}, Beta-normal \citep{banulescu2018volatility}. We list these models in the Table \ref{five models}, where the standard normal distribution case corresponds to $v_1\to\infty$ or $v_2\to\infty$. Recall that we use the density $f$ to drive $y_n$ and use the density $g$ to compute $\nablaq$ in the context of Section \ref{QSD volatility models}. 
\begin{table}[htbp]
    \centering
    \caption{Five QSD models based on Student's t-distributions}\label{five models}
    \begin{tabular}{llll}
    \toprule
    Model       & $f\sim$         &  $g\sim$   & $\nablaq_n$ \\
    \midrule
    GARCH       & $N(0,1)$          & $N(0,1)$          & $\frac{1}{2\sigma_n^2}(\frac{y_n^2}{\sigma_n^2}-1)$          \\
    t-GARCH     & $t_v$        & $N(0,1)$          & $\frac{1}{2\sigma_n^2}(\frac{y_n^2}{\sigma_n^2}-1)$        \\
    Beta-t      & $t_v$           & $t_v$           &  $\frac{1}{2\sigma^2_{n}}\left[\frac{(v+1)y_n^2}{(v-2)\sigma_n^2+y_n^2}-1\right]$         \\
    Beta-normal & $N(0,1)$          & $t_v$           &  $\frac{1}{2\sigma^2_{n}}\left[\frac{(v+1)y_n^2}{(v-2)\sigma_n^2+y_n^2}-1\right]$         \\
    QSD-T       & $t_{v_1}$ & $t_{v_2}$ &   $\frac{1}{2\sigma^2_{n}}\left[\frac{(v_2+1)y_n^2}{(v_2-2)\sigma_n^2+y_n^2}-1\right]$        \\
    \bottomrule
    \end{tabular}
\end{table}

The scaling function of score $S(\sigma_n^2)=2\sigma_n^4$, which is proportional to the inverse of the Fisher information \citep{creal2013generalized, buccheri2021continuous,blasques2023quasi}. As discussed in Section \ref{QSD volatility models}, there are various options for selecting the scaling function. However, our choice is motivated by two considerations: (1) when $f,g\sim N(0,1)$, $S(\sigma_n^2)\nablaq_n$ precisely corresponds to the updating scheme of the GARCH model; and (2) employing a unified scaling function for the models in Table 1 facilitates meaningful comparisons, as they all belong to the QSD-T models. 

For a more detailed exposition of QSD-T model, we refer readers to \cite{blasques2023quasi}, which provides a comprehensive treatment of statistical inference, including parameter estimation and hypothesis testing for its potential reduction to beta-t, t-GARCH, and GARCH specifications. The study conducted an empirical analysis on 400 US stocks from the S\&P 500, revealing that approximately 90\% of the stocks rejected the null hypothesis of t-GARCH, while over 50\% rejected beta-t. They estimated the reciprocal of the degrees of freedom. While the three models yield consistent results for $1/v_1$, the estimates for $1/v_2$ fall between 0 and $1/v_1$. Therefore, the News impact curve of the QSD-T model lie between that of the other two models (t-GARCH, beta-t) and that there is more heterogeneity.

According to \eqref{new-QSD-sde}, the continuous-time limit of \eqref{QSD-T} is\footnote{For brevity, we set $c=0$, i.e. $X_n=\sum_{i=1}^n y_i$. }
\begin{equation}\label{coQSD-T}
    \left\{\begin{aligned}
        &\dd X_t=\sigma_t\dd W_t^{(1)},\\
        &\dd \sigma_t^2=\left[\omega-(\theta-2\alpha\mu)\sigma_t^2\right]\dd t+2\alpha \sqrt{m_2-\mu^2}\sigma_t^2\dd W_t^{(2)},\\
        &\mb{P}(Z_0\in B)=\nu_0(B),  \quad \mbox{for any} \ B\in\mathcal{B}(\mb{R}\times \Gamma).
    \end{aligned}\right.
\end{equation}
where $W^{(1)}, W^{(2)}$ are two independent Brownian motions due to the symmetry of $f$ and $g$. We now focus on diffusion coefficient of $\sigma_t^2$. Both GARCH and Beta-t belong to SD models so that $\mu=0$, it can also easily compute $\mu=0$ for t-GARCH model. Thus we have $m_2=1/2$ in GARCH model, $m_2=\frac{v-1}{2(v-4)}$ in t-GARCH model, and $m_2=\frac{v}{2(v+3)}$ in Beta-t model. It recovers the results of \cite{buccheri2021continuous}. In the last two models, $\mu\neq 0$ in general, and we can compute $m_2-\mu^2$ numerically. We compare the $2\sqrt{m_2-\mu^2}$ of both models when the degrees of freedom change, see Figure \ref{beta}. In fact, we can recover the Beta-normal ($v_1\to\infty$), t-GARCH ($v_2\to\infty$), GARCH ($v_1,v_2\to\infty$), Beta-t ($v_1=v_2$) models from the case of QSD-T models in Figure \ref{qsdt} . 
\begin{figure}[htbp]
        \center
        \subfigure[Beta-t and Beta-normal models.]{
            \centering
            \includegraphics[scale=0.5]{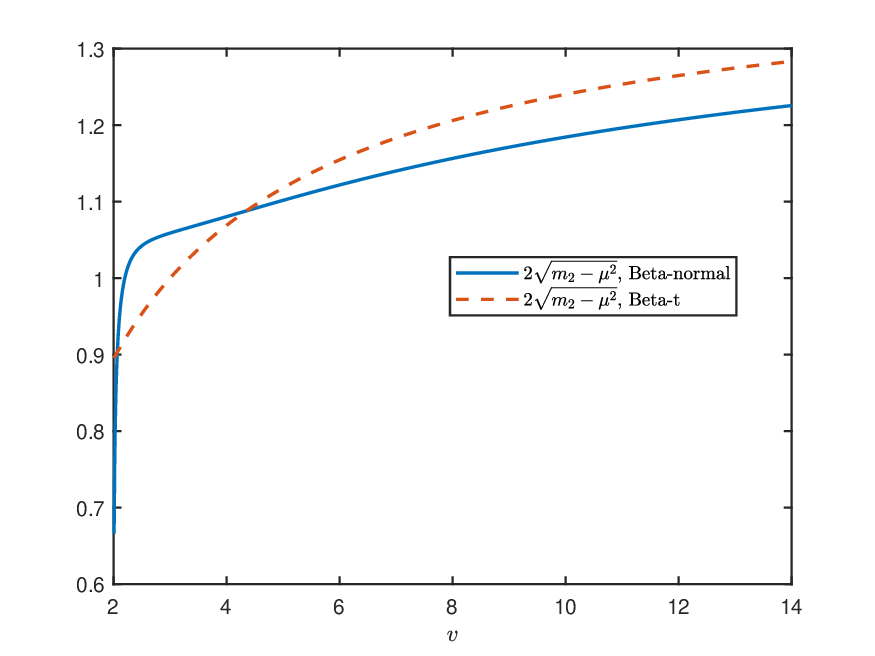}
            \label{betat}
        }\hspace{-10pt}%调整subfigure的间距
        \subfigure[QSD-T models.]{
            \centering
            \includegraphics[scale=0.59]{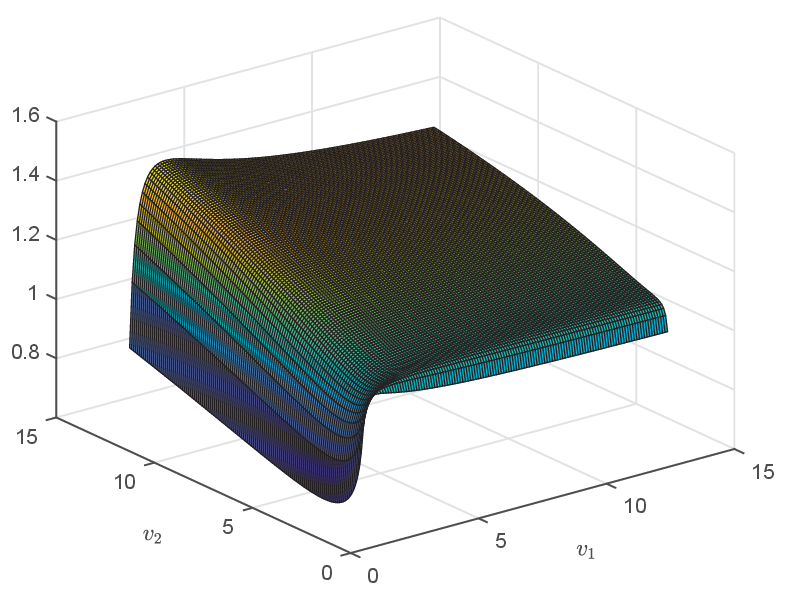}
            \label{qsdt}
        }
        \caption{The diffusion coefficients of Beta-t, Beta-normal and QSD-T models in different degrees of freedom. \label{beta}}
\end{figure}
It can be found that there exists $k\approx 4$ such that when $v>k$, the volatility of volatility of the Beta-normal diffusion is smaller than that of the Beta-t, it is predictable because the tail of normal density is lighter than t. When $v$ is small, the volatility of volatility of the Beta-normal diffusion increases rapidly with $v$, surpassing that of Beta-t, but remains finite.

\subsection{QSD-ST model}
When replacing the t-distribution to skew t-distribution in QSD-T model, we obtain the QST-ST model. The density function of skew t-distribution are given by
\[
f(x ; \varrho, v)=\left\{\begin{array}{l}
K(v)\left[1+\frac{1}{v}\left(\frac{x}{2 \varrho}\right)^{2}\right]^{-\frac{v+1}{2}}, \quad x \leq 0, \\
K(v)\left[1+\frac{1}{v}\left(\frac{x}{2(1-\varrho)}\right)^{2}\right]^{-\frac{v+1}{2}}, \quad x>0,
\end{array}\right.
\]
where $\varrho\in(0,1)$ is the skewness parameter, $\varrho>1/2$ implies $f$ is left-skewed and vice versa right-skewed, when $\varrho=1/2$, it recovers Student's t-distribution. Let $v>0$ is degree of freedom, and $K(v)=\Gamma((v+1)/2)/[\sqrt{\pi v}\Gamma(v/2)]$. We first centered and standardized it in order to serve as the distribution of $\varepsilon$. According to \cite{zhu2010generalized}, the moments of ST random variable $X$ are given by
\[\mb{E}(X^k)=(2\sqrt{v})^k[(-1)^k\varrho^{k+1}+(1-\varrho)^{k+1}]\frac{\Gamma(\frac{k+1}{2})\Gamma(\frac{v-k}{2})}{\sqrt{\pi}\Gamma(\frac{v}{2})},\]
so that the mean and standard deviation are
\begin{equation}\label{ba}
    b=\frac{2\Gamma(\frac{v-1}{2})}{\sqrt{\pi}\Gamma(\frac{v}{2})}\sqrt{v}(1-2\varrho), \quad a=\left[\frac{4v}{v-2}(3\varrho^2-3\varrho+1)-b^2\right]^{1/2}.
\end{equation}
Therefore, the density of ST distribution $t_{v,\varrho}$ we utilize is
\[
f(x)=\left\{\begin{array}{l}
a K\left(v\right)\left[1+\frac{1}{v}\left(\frac{a x+b}{2 \varrho}\right)^{2}\right]^{-\frac{v+1}{2}}, \quad x \leq -b/a \\
a K\left(v\right)\left[1+\frac{1}{v}\left(\frac{a x+b}{2\left(1-\varrho\right)}\right)^{2}\right]^{-\frac{v+1}{2}}, \quad x>-b/a
\end{array}\right..
\]

In QSD-ST model, we suppose that $f\sim t_{v_1, \varrho_1}$ and $g\sim t_{v_2, \varrho_2}$. We denote $b_i, a_i$ as the case where $(v, \varrho)$ is replaced by $(v_i, \varrho_i)$ in \eqref{ba}, and let $\varrho^*_i=\varrho_i 1_{\{x\leq -b_i/a_i\}}+(1-\varrho_i) 1_{\{x>-b_i/a_i\}}$. Then we have the following QSD-ST model:
\begin{equation}\label{QSD-ST}
    \begin{split}
        &y_n=\sigma_n\varepsilon_n, \quad \varepsilon_n\stackrel{i.i.d.}{\sim}t_{v_1,\varrho_1}\\
        &\sigma^2_{n+1}=\omega+\beta \sigma^2_{n}+\alpha \sigma^2_{n}\left[\frac{a_2 \varepsilon_n(a_2\varepsilon_n+b_2)(v_2+1)}{(a_2 \varepsilon_n+b_2)^2+4v_2{\varrho^*_2}^2}-1\right].
    \end{split}
\end{equation}
The continuous-time limit of this model has the same form as \eqref{coQSD-T}, except that two Brownian motions have a correlation coefficient with $\mbox{\rm Cov}\left[\dd W_t^{(1)},\dd W_t^{(2)}\right]=-\frac{\rho}{2\sqrt{\eta (m_2-\mu^2)}}\dd t.$ We compute it when $v_1, v_2, \varrho_1$, $\varrho_2$ change, see Figures \ref{qsdst_v} and \ref{qsdst_a}. 
\begin{figure}[htbp]
    \center
    \subfigure[Fixed $\varrho_1=2/3$, $v_1$ and $v_2$ change in the case of $\varrho_2=1/3$, $1/2$ and $2/3$.]{
        \centering
        \includegraphics[scale=0.31]{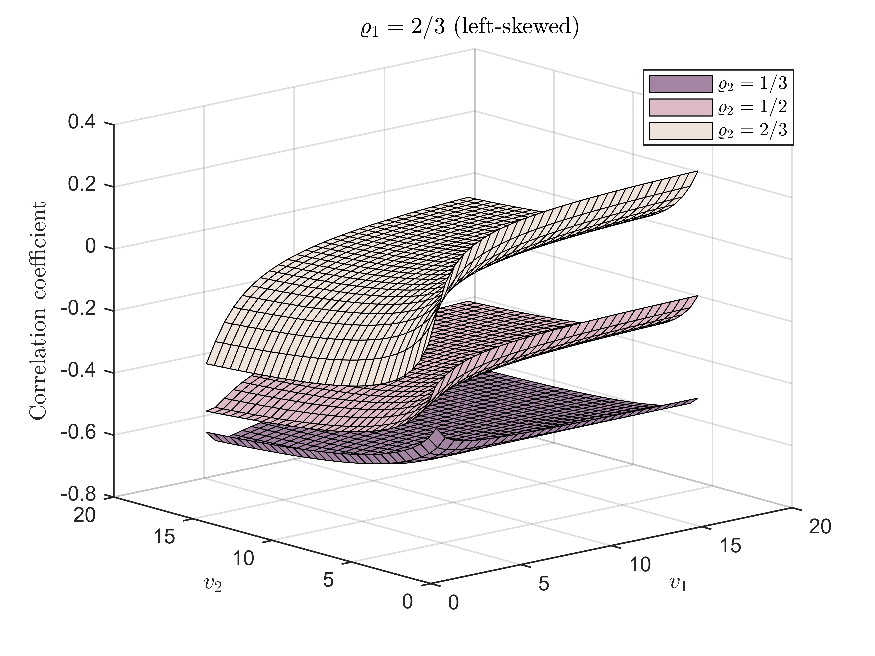}
        \label{qsdstv}
    }\hspace{10pt}
    \subfigure[Fixed $\varrho_1=1/2$, $v_1$ and $v_2$ change in the case of $\varrho_2=1/3$, $1/2$ and $2/3$.]{
        \centering
        \includegraphics[scale=0.31]{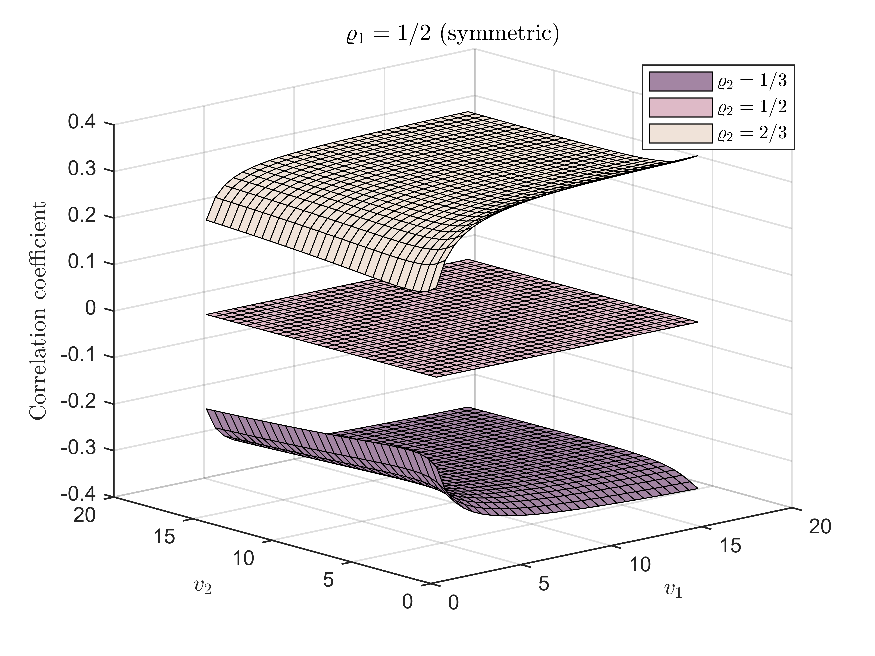}
        \label{qsdstv_sym}
    }
    \hspace{10pt}
    \subfigure[Fixed $\varrho_1=1/3$, $v_1$ and $v_2$ change in the case of $\varrho_2=1/3$, $1/2$ and $2/3$.]{
        \centering
        \includegraphics[scale=0.31]{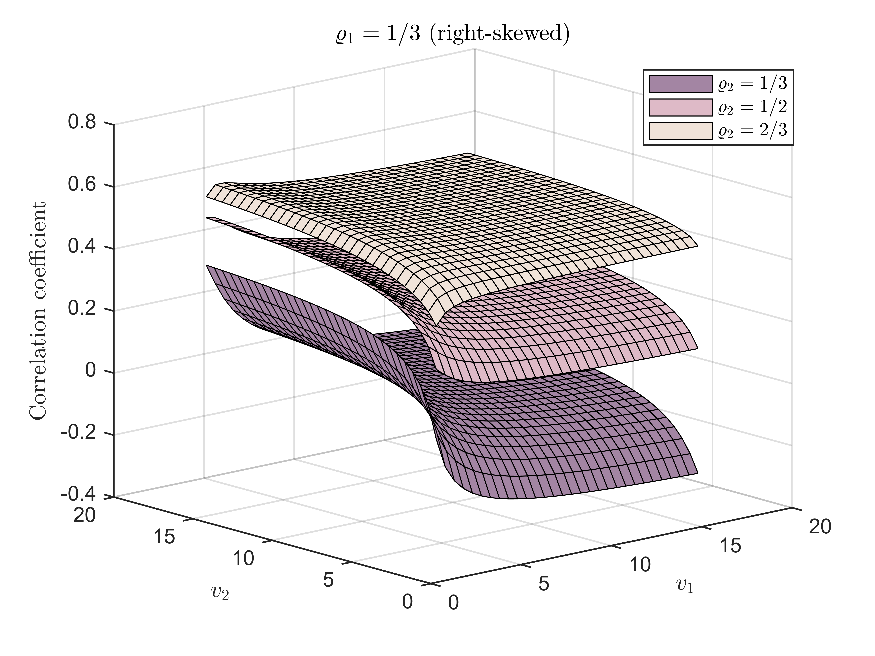}
        \label{qsdstv_right}
    }
    \caption{The correlation coefficient of QSD-ST models: fixed distribution of $\varepsilon_n$ is left-skewed, symmetric or right-skewed. \label{qsdst_v}}
\end{figure}

In Figure \ref{qsdstv}, it can be observed that when $\varepsilon_n$ is left-skewed but $g$ is right-skewed, two Brownian motions exhibit significant negative correlations for all appropriate $v_1$ and $v_2$. So it characterizes the leverage effect, aligning with the advantages of the QSD model discussed in \cite{blasques2023quasi}. Moreover, it can still produce a leverage effect even if $g$ is left-skewed, which would fail in the discrete-time case. Specifically, when $v_2$ is not too small, that is the tail of $g$ is not too heavy, it can also characterize the leverage effect. However, when $\varepsilon_n$ is right-skewed, the right-skewness of $g$ appears to be necessary, and at this point, the tail of $\varepsilon_n$ should not be too heavy, as shown in Figure \ref{qsdstv_right}. Finally, when $\varepsilon_n$ is symmetric as in Figure \ref{qsdstv_sym}, $g$ needs to be right-skewed to characterize the leverage effect, which is consistent with the empirical results of \cite{blasques2023quasi} on 400 US stocks. Additionally, when $v_1=v_2$, it obtain the continuous-time limit of the Beta-st model proposed by \cite{harvey2017volatility} but there is no correlation according to Corollary \ref{coro}. 

We can further discuss the implication of $v$. By comparing the two figures in Figure \ref{qsdst_a}, it can be seen that the heavy-tail of $\varepsilon_n$ can significantly produce a leverage effect even when $\varepsilon_n$ and $g$ are both left-skewed. Conversely, it also exacerbates the inverse leverage effect when $\varepsilon_n$ and $g$ are both right-skewed. 

\begin{figure}[htbp]
    \center
    \subfigure[Fixed $v_1=4$, $\varrho_1$ and $\varrho_2$ change in the case of $v_2=4$, $8$ and $20$.]{
        \centering
        \includegraphics[scale=0.38]{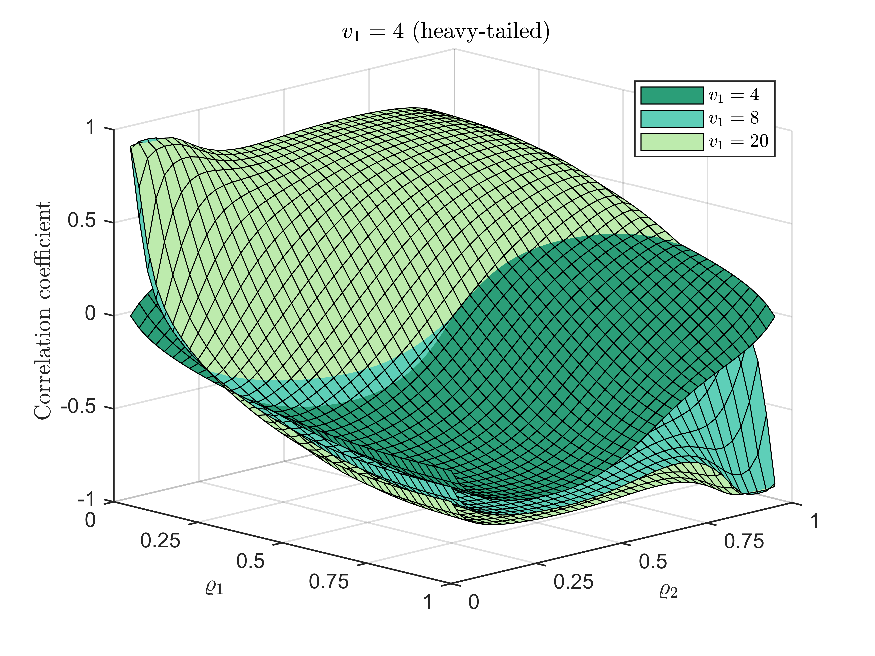}
        \label{qsdsta}
    }\hspace{10pt}
    \subfigure[Fixed $v_1=20$, $\varrho_1$ and $\varrho_2$ change in the case of $v_2=4$, $8$ and $20$.]{
        \centering
        \includegraphics[scale=0.38]{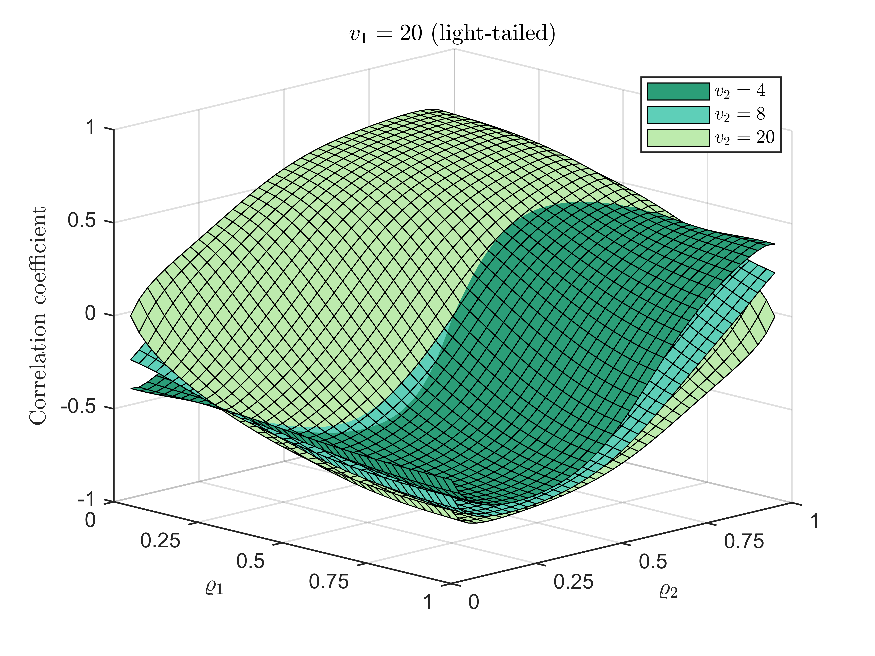}
        \label{qsdsta_light}
    }
    \caption{The correlation coefficient of QSD-ST models: fixed distribution of $\varepsilon_n$ is heavy-tailed or light-tailed. \label{qsdst_a}}
\end{figure}

\section{Approximating correlated volatility diffusions with
QSD models}\label{QAML}

According to the main results, QSD models based on two different asymmetric noises weakly converge to correlated volatility diffusions under a set of scaling rate conditions, as outlined in Assumption \ref{a1}, thus serving as an approximation. A natural application is that we can estimate and filter the diffusion processes by its counterpart QSD models. The statistical inference for the latter is standard, and they are easy to estimate using maximum likelihood estimation \citep{blasques2023quasi}. This forms the basis of the QAML method, which has been advocated, e.g., by \cite{barone2005option, fornari2006approximating, stentoft2011american, hafner2017weak} among others. 

In this section, a Monte Carlo experiment is designed to verify its ability to estimate and filter the correlated volatility diffusions. Specifically, we assume that the DGP is the following stochastic volatility process: 

\begin{equation}\label{DGP}
    \left\{\begin{aligned}
        &\dd X_t=\sigma_t\dd W_t^{(1)},\\
        &\dd \sigma_t^2=\left(\omega-\theta\sigma_t^2\right)\dd t+\kappa\sigma_t^2\dd W_t^{(2)},\\
        &\mbox{Cov}(\dd W_t^{(1)}, \dd W_t^{(2)})=\rho \dd t.
    \end{aligned}\right.
\end{equation}
When $\rho=0$, it can be inferenced based on SD models as in \cite{buccheri2021continuous}. However it fails when $\rho\neq 0$, because the two Brownian motion are independent in the continuous-time limit of SD model. 

We simulate $N=1000$ paths of \eqref{DGP} using the Euler–Maruyama discretization, with a time step of $\Delta t=1/19656$. Although this produces all the data, we can only sample them at certain frequencies, such as daily, weekly, or monthly, and at higher frequencies such as hourly or minutely. Therefore, we set the length between successive observations as $s\in\{6, 12, 78, 390, 1716, 4914\}$ and the time interval as $h=s\Delta t$. If the unit of time is years, this roughly corresponds to taking observations every thirty minutes, one hour, one day, one week, one month, and one quarter \footnote{For example, assuming there are 252 trading days in a year, with 6.5 hours of trading each day.} . 

In the experiment, the parameters of DGP are chosen as $\omega = 0.01$, $\theta = 0.2$, $\kappa = 2.5$, $\rho=-0.5$ and QSD-ST model \eqref{QSD-ST} are selected to approximate this diffusion. For fixed length $s$, we generate $s/\Delta t$ data points for one path to ensure that the same 19,656 observations are available for each frequency. We then estimate the parameters of the QSD-ST model based on these observed samples using the maximum likelihood estimation method, 
\[\Theta=\{\omega_h, \beta_h, \alpha_h, \varrho_1, v_1, \varrho_2, v_2\}.\]
Next, based on the convergence condition and continuous-time limit form \eqref{coQSD-T} and \eqref{formrho}, the diffusion parameters $(\omega, \theta, \kappa, \rho)$ are recovered from $\Theta$. Finally, the volatility is filtered and compared with the true values generated from \eqref{DGP} in the sense of root mean square error (RMSE). For comparison, we also performed estimation and filtering using GARCH, t-GARCH, Beta-t, Beta-st and QSD-T models. Table \ref{results} reports the results. For each parameter, the mean and standard deviation of the estimates over $N$ paths are shown. For models involving only one t-distribution, $v=v_1, \varrho=\varrho_1$ are shown in table. Each RMSE is normalized by the RMSE of the GARCH model.

\begin{table}[http]
    \centering
    \caption{Recovered parameters $(\omega, \theta, \kappa, \rho)$, model parameters $(\varrho_i,v_i)$ and filter RMSE}
    \label{results}
    \footnotesize
    \begin{tabular}{cccccccc}
        \toprule
        &  & $s=6$ & $s=12$ & $s=78$ & $s=390$ & $s=1716$ & $s=4914$ \\
        \midrule
        \multirow{2}{*}{$\omega$}  & \multirow{8}{*}{QSD-ST} & 0.0157 & 0.0146 & 0.0157 & 0.0172 & 0.0174 & 0.0166 \\
                & & (0.0077) & (0.0039) & (0.0020) & (0.0013) & (0.0009) & (0.0008) \\
        \multirow{2}{*}{$\theta$} & & 18.3323 & 13.5450 & 6.4293 & 3.6777 & 2.1090 & 1.3149 \\
               & & (1.7952) & (1.3330) & (0.5097) & (0.2322) & (0.0963) & (0.0533) \\
        \multirow{2}{*}{$\kappa$} &  & 2.5890 & 2.6275 & 2.7219 & 2.8291 & 2.6527 & 2.2198 \\
               & & (0.1767) & (0.1721) & (0.1433) & (0.1785) & (0.1549) & (0.1447) \\
        \multirow{2}{*}{$\rho$}  & & -0.5229 & -0.5240 & -0.5253 & -0.5162 & -0.5153 & -0.5113 \\
               & & (0.0126) & (0.0114) & (0.0100) & (0.0112) & (0.0105) & (0.0146) \\
        \hline
        \multirow{4}{*}{$\varrho_1$} & QSD-ST  & 0.5063 & 0.5094 & 0.5219 & 0.5405 & 0.5656 & 0.5861 \\
                 &  & (0.0053) & (0.0052) & (0.0049) & (0.0045) & (0.0040) & (0.0037) \\
                 & Beta-st  & 0.4994 & 0.5010 & 0.5078 & 0.5220 & 0.5461 & 0.5678 \\
                 &  & (0.0056) & (0.0055) & (0.0049) & (0.0044) & (0.0039) & (0.0036) \\
        \hline
        \multirow{10}{*}{$v_1$}   & QSD-ST  & 35.7464 & 25.5205 & 11.6330 & 6.7116 & 4.3703 & 3.3638 \\
                 &  & (9.4181) & (4.3414) & (0.8394) & (0.2699) & (0.1191) & (0.0734) \\
                 & t-GARCH  & 7.2876 & 6.6948 & 5.3601 & 6.5505 & 3.4941 & 3.0487 \\
                 &  & (2.6263) & (2.2321) & (0.9636) & (0.2580) & (0.1385) & (0.0888) \\
                 & Beta-t  & 158.8462 & 125.7032 & 18.4558 & 7.3869 & 4.2986 & 3.3416 \\
                 &  & (64.7877) & (52.5941) & (3.7351) & (0.4260) & (0.1531) & (0.1026) \\
                 & Beta-st  &160.9632  & 126.3390 & 18.7387  & 7.3456 & 4.2130 & 3.2232 \\
                 &  & (59.7821) & (56.9696) & (4.5420) & (0.4198) & (0.1454) & (0.0959) \\
                 & QSD-T  & 22.8392 & 19.5691 & 9.7865 & 6.5384 & 4.3801 & 3.4007 \\
                 &  & (6.9123) & (5.3643) & (0.4749) & (0.2188) & (0.1038) & (0.0703) \\
        \hline
        \multirow{2}{*}{$\varrho_2$} & QSD-ST & 0.1219 & 0.1263 & 0.1494 & 0.1986 & 0.2636 & 0.3289 \\
                 &  & (0.0098) & (0.0092) & (0.0096) & (0.0114) & (0.0112) & (0.0155) \\
        \hline
        \multirow{4}{*}{$v_2$}   & QSD-ST  & 73.3495 & 68.5080 & 57.2311 & 68.6815 & 64.6205 & 68.9667 \\
                 &  & (42.6400) & (35.4345) & (18.9173) & (24.8915) & (9.0309) & (8.1920) \\
                 & QSD-T  & 36.5237 & 42.5638 & 255.5882 & 231.2275 & 197.3231 & 170.6124 \\
                 &  & (13.9576) & (17.6468) & (18.1998) & (15.3321) & (13.8647) & (17.7997) \\
        \hline
        \multirow{6}{*}{RMSE}    
        & GARCH     & 1.0000   & 1.0000   & 1.0000   & 1.0000   & 1.0000   & 1.0000\\
        & QSD-ST   & 0.4835   & 0.5251   & 0.6003   & 0.6917   & 0.8679   & 0.8832   \\
        & t-GARCH   & 1.1524   & 1.1661   & 1.0748   & 1.1004   & 1.1065   & 0.9974   \\
        & Beta-t   & 0.9779   & 0.7817   & 0.7794   & 0.8396   & 0.9703   & 0.9517   \\
        & Beta-st   & 0.7734   & 0.8060   & 0.7589   & 0.8548   & 0.9953   & 0.9303   \\
        & QSD-T   & 0.5527   & 0.6637   & 0.7704   & 0.7835   & 0.8741   & 0.9050   \\
        \bottomrule
    \end{tabular}
    
\end{table}

It can be observed that the QAML estimates of $\theta$ are inconsistent, confirming the findings of \cite{wang2002asymptotic}. Similarly, the simulation results in \cite{hafner2017weak} and \cite{buccheri2021continuous} also demonstrate significant overestimation of $\theta$. In fact, we recover $\theta$ through the relation $\theta = (1 - \beta_h)/h$. However, we observe that as $h$ decreases, the estimates of $\beta_h$ do not sufficiently approach 1, resulting in an inflated ratio. But except for $\theta$, the other parameters are close to the true values under any fixed frequency, and the standard deviation is even smaller at low frequency (large $s$). We need to pay special attention to the correlation coefficient $\rho$, which is closely related to the parameters $(\varrho,v)$ of two skew t-distributions. It can be seen that the estimation of skewness parameter $\varrho_1>0.5$, and $\varrho_2<0.5$, which exactly describes a negative correlation, i.e., the leverage effect. Additionally, as the frequency decreases, $\varrho_1$ increases, while the degrees of freedom $v_1$ decreases. It implies that as the likelihood of discrete-time observations of the stochastic volatility \eqref{DGP} becomes more non-normal and asymmetric with time aggregation, the QSD-ST model captures the dynamics better through robust left-skewed and fat-tailed estimates. As the frequency increases, the estimated degrees of freedom gradually increase, because over short time periods, asset returns can be considered as a It\^o integral of constant volatility, following a normal distribution. It is important to note, however, that unlike the GARCH model, the score driving the volatility comes from an asymmetric distribution, even if its tail resembles that of a normal distribution. This is the key to generating negative correlation.

For the other models, their estimated values of $v$ (t-GARCH, Beta-t, Beta-st, QSD-T) and $\varrho$ (Beta-st) follow a consistent trend as $s$ increases, with $v$ decreasing and $\varrho$ increasing. This again verifies the conclusion that asset returns aggregate over time, showing fat-tails and left-skewness. Notably, the QSD-ST model's filter achieves the lowest RMSE across all frequencies, especially at higher frequencies. This is because as the frequency increases, the QSD-ST model gradually approximates a correlated volatility diffusion, while the other models do not. This further explains empirical findings of \cite{blasques2023quasi} from another perspective: why the QSD-ST model outperforms the SD model when estimating and filtering empirical data with leverage effects.

\section{Conculsion}\label{Conclusion}
The SD model closely links the shape of the conditional distribution of innovations to the design of the updating equation for time-varying parameters, whereas the QSD model breaks this connection. It directly leads to the emergence of correlation between the two Brownian motions in its continuous-time limit. 

Specifically, we examines the continuous-time limit when the QSD model is used to describe volatility, and the loss function is chosen as the log-likelihood of another scale family distribution, extending the continuous-time limit of \cite{buccheri2021continuous} on the SD model. We find that the limit is a stochastic volatility diffusion, where two Brownian motions are correlated. This correlation is closely tied to the two distributions that drive the innovations and compute the score. When these two distributions are the same (i.e., the SD model) or both are symmetric, the correlation vanishes. 

Through examples of the QSD-T and QSD-ST models, we specifically demonstrate how the choice of distribution affects key parameters in the diffusion limit, namely the diffusion coefficient and correlation coefficient. Finally, we employ the QSD model to approximate correlated volatility diffusion. Experimental results show that although its QAML estimates are not consistent, it can roughly recover diffusion parameters even with low-frequency data. The comparison with other models indicates that, with time aggregation, data generated by correlated volatility diffusion exhibit fat-tails and left-skewness, where the QSD-ST model provides the best filtering performance.

\newpage
\appendix
\section{Weak convergence of Markov processes to diffusion}\label{appendix}
\subsection{Set-up}\label{appendix1}
Let $\{X^{(h)}_{kh}\}_{k\in \mb{N}}$ be a $\mb{R}^d$-valued discrete-time Markov process, which has a timestamp of length $h\in \mb{R}^+$. $\mathcal{F}_{kh}=\sigma(X^{(h)}_{kh}, X^{(h)}_{(k-1)h},\ldots, X^{(h)}_0,kh)$ is its $\sigma$-algebra. Let $\{P_{kh}^{(h)}\}_{k\in \mb{N}}: (\mb{R}^d, \mathcal{B}(\mb{R}^d))\to [0,1]$ be the family of one step transition function of $\{X^{(h)}_{kh}\}_{k\in \mb{N}}$ and $\nu_h$ be a probability measure on $(\mb{R}^d,\mathcal{B}(\mb{R}^d))$, denoting the initial distribution of $X_0^{(h)}$. Let $D([0,\infty),\mb{R}^{d})$ be the space of càdlàg functions from $[0,\infty)$ to $\mb{R}^{d}$ equipped with the Skorokhod topology. Now we construct a continuous-time process $X^{(h)}=\{X_t^{(h)}\}_{t\geq 0}$ taking values in $D([0,\infty), \mb{R}^{d})$ based on $\{X^{(h)}_{kh}\}_{k\in \mb{N}}$, and let $\mb{P}^{(h)}$ be its probability measure, satisfying
\[\begin{split}
    &\mb{P}^{(h)}\left[X_0^{(h)}\in B\right]=\nu_h(B), \quad \forall B\in\mathcal{B}(\mb{R}^d), \\
    &\mb{P}^{(h)}\left[\left.X_{(k+1)h}^{(h)}\in B \right|\mathcal{F}_{kh}\right]=P_{kh}^{(h)}(X_{kh}^{(h)}, B), \\
    &\mb{P}^{(h)}\left[X_t^{(h)}=X_{kh}^{(h)}, kh\leq t<(k+1)h\right]=1.
\end{split}\]
Intuitively, $X_t^{(h)}$ can be seen as a continuous-time process obtained by extending the values of $X_{kh}^{(h)}$ to the interval $[kh,(k+1)h)$. 

\begin{remark}
    Note that this extension is not linear but rather stepwise, thus $X^{(h)}$ taking values in $D([0,\infty),\mb{R}^{d})$ rather than a smaller space $C([0,\infty),\mb{R}^{d})$, the continuous function space. It resulting in weaker conditions regarding initial values when $X^{(h)}$ weakly converge in $D$. Specifically, while linear interpolation necessitates $X_0^{(h)}=X_0$ for all $h$, the latter only requires $X_0^{(h)}\tod X_0$ as $h\to 0$, where ``$\tod$'' denotes converges in distribution. For further details, refer to \cite{ethier1986markov}. 
\end{remark}

Here we summarize and distinguish the four processes that have mentioned above, which simultaneously clarify how we transition from discrete-time models to continuous-time models: 
    \begin{enumerate}[label=(\roman*)]
        \item The discrete-time process (model) $X_n$ we are interested;
        \item The discrete-time process $X_{kh}^{(h)}$ associated with time interval $h$, and taking values only at time $0,h,2h\ldots$;
        \item The continuous-time process $X_{t}^{(h)}$ constructed based on $X_{kh}^{(h)}$, whose paths are step functions taking jumps at time $0,h,2h\ldots$;
        \item The continuous-time process $X_t$, which is the weak convergence limit of $X_t^{(h)}$ as $h\to0$.
    \end{enumerate} 
We begin with the process $X_n$, and as the observation interval shrinks, we obtain $X_{kh}^{(h)}$. Then, we extend it to the continuous-time version $X_{t}^{(h)}$ by left endpoint extension. Finally, we investigate the weak convergence of $X_{t}^{(h)}$ as random elements in the space $D([0,\infty),\mb{R}^{d})$. 

\subsection{Weak convergence theorem}\label{appendix2}
We present a set of sufficient conditions for the weak convergence of a sequence of Markov processes $X^{(h)}$, indexed by the time interval $h$, to a diffusion process $X=\{X_t\}_{t\geq 0}$ as $h\to0$. This result is mainly derived from \cite{stroock1997multidimensional} and \cite{nelson1990arch}. Before proceeding, we define the following quantity:
\[\begin{split}
    &b_h(x,t):= h^{-1}\mb{E}\left[\left.X_{t+h}^{(h)}-X_{t}^{(h)}\right|X_{t}^{(h)}=x\right],\\
    &a_h(x,t):= h^{-1} \mb{E}\left[\left.\left(X_{t+h}^{(h)}-X_{t}^{(h)}\right)\left(X_{t+h}^{(h)}-X_{t}^{(h)}\right)^\TT\right|X_{t}^{(h)}=x\right], \\
    &c_{h,i,\delta}(x,t):= h^{-1}\mb{E}\left[\left.\left|\left \langle X_{t+h}^{(h)}-X_{t}^{(h)}, e_i\right \rangle\right|^{2+\delta}\right|X_{t}^{(h)}=x\right]. 
\end{split}\]
Where, $\mb{E}[\cdot]$ denotes the expectation corresponding to the probability measure $\mathbb{P}^{(h)}$, the superscript $(h)$ is omitted without causing ambiguity. $\langle\cdot,\cdot\rangle$ represents the natural inner product in $\mathbb{R}^d$, and $\{e_1, e_2, \ldots, e_d\}$ are its unit vectors.

There are four conditions that guarantee the convergence:

\begin{condition}\label{con1}
    There exists a continuous function $a(x,t):\mb{R}^d\times[0,\infty)\to M_{d\times d}$, the space of $d\times d$ nonnegative definite symmetric matrices, and a continuous function $b(x,t):\mb{R}^d\times[0,\infty)\to \mb{R}^{d}$, such that for all $ R>0, T>0$, 
    \begin{equation}\label{con11}
        \lim_{h\to0}\sup_{\|x\|\leq R, t\leq T}\|b_h(x,t)-b(x,t)\|=0,
    \end{equation}
    \begin{equation}\label{con12}
        \lim_{h\to0}\sup_{\|x\|\leq R, t\leq T}\|a_h(x,t)-a(x,t)\|=0.
    \end{equation}
    Furthermore, there exists a $\delta>0$, for all $R>0, T>0, i=1,2,\ldots,d$ such that
    \begin{equation}\label{con13}
        \lim_{h\to0}\sup_{\|x\|\leq R, t\leq T}c_{h,i,\delta}(x,t)=0.
    \end{equation}
\end{condition}
\begin{condition}\label{con2}
    There exists a continuous function $\sigma(x,t):\mb{R}^d\times[0,\infty)\to V_{d\times d}$, the space of $d\times d$ matrices, such that $a(x,t)=\sigma(x,t)\sigma(x,t)^\TT$.
\end{condition}
\begin{condition}\label{con3}
    As $h\to0$, $X_0^{(h)} \tod X_0$, which has the probability measure $\nu_0$ on $(\mb{R}^d,\mathcal{B}(\mb{R}^d))$.
\end{condition}
\begin{condition}\label{con4}
    There exists a unique weak solution to the following SDE defined by $b(x,t)$ in condition \ref{con1}, $\sigma(x,t)$ in condition \ref{con2} and initial probability measure $\nu_0$ in condition \ref{con3}:
    \begin{equation}\label{sde}
        \left\{\begin{aligned}
            &\dd X_t=b(X_t,t)\dd t+\sigma(X_t,t)\dd W_t,\\
            &\mb{P}(X_0\in B)=\nu_0(B),  \quad \mbox{for any}\ B\in\mathcal{B}(\mb{R}^d), 
        \end{aligned}\right.
    \end{equation}
    where $W_t$ is an $d$-dimensional standard Brownian motion.
\end{condition}
\begin{theorem}[Nelson, 1990]\label{corethm}
    Under conditions \ref{con1}--\ref{con4}, $X_t^{(h)}$ weakly converges to $X_t$, which is the unique weak solution of SDE \eqref{sde}. 
\end{theorem}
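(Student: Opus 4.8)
The plan is to prove Theorem \ref{corethm} by the martingale-problem method of Stroock and Varadhan, splitting the argument into relative compactness and identification of limits. Concretely, I would (i) show that the laws of the c\`adl\`ag processes $\{X^{(h)}\}$ form a relatively compact family in $D([0,\infty),\mb{R}^d)$, and (ii) show that every subsequential weak limit solves the martingale problem for the second-order operator
\[
\mathcal{L}_t f(x) = \sum_{i=1}^d b_i(x,t)\,\partial_{x_i}f(x) + \tfrac12\sum_{i,j=1}^d a_{ij}(x,t)\,\partial_{x_i}\partial_{x_j}f(x)
\]
with initial law $\nu_0$. Since Condition \ref{con4} guarantees this problem has a unique weak solution, all subsequential limits must coincide, and relative compactness then upgrades via Prokhorov's theorem to full weak convergence of $X_t^{(h)}$ to the law of the unique solution of \eqref{sde}.

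For the identification step I would fix a test function $f\in C_c^\infty(\mb{R}^d)$ and expand the one-step conditional increment by Taylor's theorem: conditionally on $X_t^{(h)}=x$,
\[
\mb{E}\!\left[f(X_{t+h}^{(h)})-f(x)\,\big|\,X_t^{(h)}=x\right] = \Big\langle b_h(x,t),\nabla f(x)\Big\rangle h + \tfrac12\,\mathrm{tr}\!\left[a_h(x,t)\nabla^2 f(x)\right] h + r_h(x,t),
\]
where the first two terms are read off from $b_h$ and $a_h$, and the Lagrange remainder $r_h$ is dominated by a multiple of $\max_i c_{h,i,\delta}(x,t)\,h$, hence $o(h)$ by \eqref{con13}. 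Dividing by $h$ and using the uniform-on-compacts convergences \eqref{con11} and \eqref{con12}, I would pass to the limit to conclude that under any subsequential limit law the process $f(X_t)-f(X_0)-\int_0^t\mathcal{L}_s f(X_s)\,\dd s$ is a martingale. Condition \ref{con2} ($a=\sigma\sigma^\TT$) then represents this martingale problem as the It\^o SDE \eqref{sde}, and Condition \ref{con3} pins the initial distribution to $\nu_0$.

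The technical heart of the proof, and the step I expect to be the main obstacle, is relative compactness in the Skorokhod topology. Here I would verify a standard tightness criterion (as in \cite{stroock1997multidimensional} or \cite{ethier1986markov}): the bounds on $b_h$ and $a_h$ furnished by \eqref{con11}--\eqref{con12} control the first two infinitesimal moments of the increments, while \eqref{con13} --- the $(2+\delta)$-th conditional moment being $o(h)$ uniformly on compacts --- ensures that the probability of an anomalously large increment over a short window is negligible; this simultaneously supplies the modulus-of-continuity estimate needed for tightness and forces every limit point to have continuous sample paths. The delicate points are that the bounds in Condition \ref{con1} are only uniform on compact sets in $x$, so one must localize, obtain oscillation control on each compact set, and patch these estimates together while ruling out escape to infinity in finite time; and that converting the $(2+\delta)$-moment bound into a genuine path-regularity statement in $D([0,\infty),\mb{R}^d)$ requires the full machinery of the $J_1$ topology. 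Since this abstract limit theorem is precisely the content of \cite{stroock1997multidimensional} and \cite{nelson1990arch}, the most economical route is to confirm that Conditions \ref{con1}--\ref{con4} coincide with the hypotheses of their theorems and invoke the conclusion directly, reserving the original work for verifying these conditions in the QSD setting.
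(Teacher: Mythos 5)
Your proposal is correct and follows essentially the same route as the paper: the paper states Theorem \ref{corethm} without proof, citing \cite{nelson1990arch} and \cite{stroock1997multidimensional}, whose argument is precisely the martingale-problem scheme you sketch (tightness in $D([0,\infty),\mb{R}^d)$ driven by the $(2+\delta)$-moment condition \eqref{con13}, identification of subsequential limits via the Taylor expansion of test functions against $b_h$ and $a_h$, and coincidence of all limit points by the uniqueness in Condition \ref{con4}). Your closing observation---that the economical route is to confirm Conditions \ref{con1}--\ref{con4} match the hypotheses of those references and invoke their conclusion directly---is exactly what the paper does, reserving its original effort for verifying these conditions in the QSD setting.
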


\section{Proof of Theorem \ref{main thm}}\label{appendixB}

The main thing we need do is to verify the four conditions in Theorem \ref{corethm}. 

    {\it Condition} \ref{con1}.\eqref{con11}:
    From \eqref{discrete-QSD}, the increment of $Z_t^{(h)}=(X_t^{(h)},\lambda_{t+h}^{(h)})$ in unit of time interval $h$ is given by 
    \[\begin{split}
        &X_{t+h}^{(h)}-X_{t}^{(h)}=c_h\lambda_{t+h}^{(h)}+y_{t+h}^{(h)}=c_h\lambda_{t+h}^{(h)}+\sqrt{\varphi(\lambda_{t+h}^{(h)})}\varepsilon_{t+h}^{(h)},\\
        &\lambda_{t+2h}^{(h)}-\lambda_{t+h}^{(h)}=\omega_h+(\beta_h-1)\lambda_{t+h}^{(h)}+\alpha_{h}S(\lambda_{t+h}^{(h)})\nablaq_{t+h}^{(h)}.
    \end{split}
    \]
    Note that $\lambda_{t+h}^{(h)}$ is $\mathcal{F}_{t}^{(h)}$-measurable, so we can take it out of the conditional expectation. Since the fact that innovation $y$ and quasi score $\nablaq$ have zero mean in our setting, we have  
    \[\begin{split}
        &h^{-1}\mb{E}\left[\left.X_{t+h}^{(h)}-X_{t}^{(h)}\right|\mathcal{F}_{t}^{(h)}\right]=h^{-1}c_h\lambda_{t+h}^{(h)},\\
        &h^{-1}\mb{E}\left[\left.\lambda_{t+2h}^{(h)}-\lambda_{t+h}^{(h)}\right|\mathcal{F}_{t}^{(h)}\right]=h^{-1}\omega_h-h^{-1}(1-\beta_h)\lambda_{t+h}^{(h)}.
    \end{split}\]
    Therefore, for any $z = (x, \lambda)\in \mb{R}\times \Gamma, t\geq 0$, by assumption \ref{a1}, 
    \[b_h(z,t)=\frac{1}{h}\begin{bmatrix}c_h\lambda\\ \omega_h-(1-\beta_h)\lambda
    \end{bmatrix}\stackrel{h\to0}\longrightarrow\begin{bmatrix}c\lambda\\\omega-\theta\lambda
    \end{bmatrix}=b(z,t).\]
    Clearly, $b(z,t)$ is continuous and the convergence is uniform.

    {\it Condition} \ref{con1}.\eqref{con12}: In fact, we don't need to expand the squared increment completely, because many terms involving $\varepsilon_{t+h}^{(h)}$ or $\nablaq_{t+h}^{(h)}$ can be dropped when taking the expectation. Recall that $h^{-1/2}\varepsilon_{t+h}^{(h)}\big|\mathcal{F}_{t}^{(h)}\stackrel{d}{=}U\stackrel{d}{\sim}f(\cdot)$, we have
    \[\begin{split}
        &h^{-1}\mb{E}\left[\left.\left(X_{t+h}^{(h)}-X_{t}^{(h)}\right)^2\right|\mathcal{F}_{t}^{(h)}\right]=h^{-1}\mb{E}[c_h^2(\lambda_{t+h}^{(h)})^2+(y_{t+h}^{(h)})^2|\mathcal{F}_{t}^{(h)}]\\
        &=h^{-1}c_h^2(\lambda_{t+h}^{(h)})^2+\mb{E}\left[\left(\sqrt{\varphi(\lambda_{t+h}^{(h)})}h^{-1/2}\varepsilon_{t+h}^{(h)}\right)^2\bigg|\mathcal{F}_{t}^{(h)}\right]=h^{-1}c_h^2(\lambda_{t+h}^{(h)})^2+\varphi(\lambda_{t+h}^{(h)})\eta,
    \end{split}\]
    \[\begin{split}
        &h^{-1}\mb{E}\left[\left.\left(\lambda_{t+2h}^{(h)}-\lambda_{t+h}^{(h)}\right)^2\right|\mathcal{F}_{t}^{(h)}\right]\\
        &=h^{-1}\mb{E}\left[\omega_h^2+(1-\beta_h)^2(\lambda_{t+h}^{(h)})^2+\alpha_{h}^2 S^2(\lambda_{t+h}^{(h)})(\nablaq_{t+h}^{(h)})^2-2\omega_h(1-\beta_h)\lambda_{t+h}^{(h)}\big|\mathcal{F}_{t}^{(h)}\right]\\
        &=h^{-1}\omega_h^2+h^{-1}(1-\beta_h)^2(\lambda_{t+h}^{(h)})^2+h^{-1}\alpha_{h}^2 S^2(\lambda_{t+h}^{(h)})\gamma(\lambda^{(h)}_{t+h})-2h^{-1}\omega_h(1-\beta_h)\lambda_{t+h}^{(h)},
    \end{split}\]

    \[\begin{split}
    &h^{-1}\mb{E}\left[\left.\left(X_{t+h}^{(h)}-X_{t}^{(h)}\right)\left(\lambda_{t+2h}^{(h)}-\lambda_{t+h}^{(h)}\right)\right|\mathcal{F}_{t}^{(h)}\right]\\
    &=h^{-1}\mb{E}\left[\left(c_h\lambda_{t+h}^{(h)}+\sqrt{\varphi(\lambda_{t+h}^{(h)})}\varepsilon_{t+h}^{(h)}\right)\left(\omega_h-(1-\beta_h)\lambda_{t+h}^{(h)}+\alpha_{h}S(\lambda_{t+h}^{(h)})\nablaq_{t+h}^{(h)}\right)\big|\mathcal{F}_{t}^{(h)}\right]\\
    &=h^{-1}\left[c_h\omega_h\lambda_{t+h}^{(h)}-c_h(1-\beta_h)(\lambda_{t+h}^{(h)})^2\right]+h^{-1}\sqrt{\varphi(\lambda_{t+h}^{(h)})}\alpha_{h}S(\lambda_{t+h}^{(h)})\mb{E}\left[\varepsilon_{t+h}^{(h)}\nablaq_{t+h}^{(h)}\big|\mathcal{F}_{t}^{(h)}\right].
    \end{split}
    \]
    Focusing on the last term in the previous equation, and substituting the result of Lemma \ref{nablalem}, we have
    \[\begin{split}
        &h^{-1}\sqrt{\varphi(\lambda_{t+h}^{(h)})}\alpha_{h}S(\lambda_{t+h}^{(h)})\mb{E}\left[\varepsilon_{t+h}^{(h)}\nablaq_{t+h}^{(h)}\big|\mathcal{F}_{t}^{(h)}\right]\\
        &=h^{-1}\sqrt{\varphi(\lambda_{t+h}^{(h)})}\alpha_{h}S(\lambda_{t+h}^{(h)})\frac{-\varphi'(\lambda_{t+h}^{(h)})}{2\varphi(\lambda_{t+h}^{(h)})}\sqrt{h}\int_{-\infty}^{\infty}\left[1+\frac{g'(u)}{g(u)}u\right]u f(u)\dd u\\
        &=h^{-1/2}\alpha_h S(\lambda_{t+h}^{(h)})\frac{-\varphi'(\lambda_{t+h}^{(h)})}{2\sqrt{\varphi(\lambda_{t+h}^{(h)})}}\left[\int_{-\infty}^{\infty}u f(u)\dd u+\int_{-\infty}^{\infty}\frac{u^2g'(u)}{g(u)}f(u)\dd u\right]\\
        &=h^{-1/2}\alpha_h S(\lambda_{t+h}^{(h)})\frac{-\varphi'(\lambda_{t+h}^{(h)})}{2\sqrt{\varphi(\lambda_{t+h}^{(h)})}}\rho.
    \end{split}
    \]
    According to assumption \ref{a1}, $c_h, \omega_h, 1-\beta_h$ all converge to zero at the order of $h$. Terms such as $h^{-1}c_h^2(\lambda_{t+h}^{(h)})^2$, $h^{-1}\omega_h^2$, $h^{-1}(1-\beta_h)^2(\lambda_{t+h}^{(h)})^2$ vanish uniformly on $\Gamma$ as $h\to0$. Therefore, for any $z = (x, \lambda)\in \mb{R}\times \Gamma, t\geq 0$, 
    \[a_h(z,t)\stackrel{h\to0}\longrightarrow\begin{bmatrix}
        \varphi(\lambda)\eta & \alpha S(\lambda)\frac{-\varphi'(\lambda)}{2\sqrt{\varphi(\lambda)}}\rho\\
         \alpha S(\lambda)\frac{-\varphi'(\lambda)}{2\sqrt{\varphi(\lambda)}}\rho & \alpha^2 S^2(\lambda)\gamma(\lambda)
    \end{bmatrix}=a(z,t).\]
    The differentiability of $\varphi(\lambda)$ implies its continuity, and combining assumption \ref{a3}, it can be obtained $S(\lambda)\varphi'(\lambda)/\sqrt{\varphi(\lambda)}$ and $S^2(\lambda)\gamma(\lambda)=m_2\left[S(\lambda)\varphi'(\lambda)/\varphi(\lambda)\right]^2$ are continuous on $\Gamma$. Therefore, $a(z,t)$ is continuous, and clearly $\{a_h(z,t)\}_h$ are uniformly equicontinuous, which guarantee the convergence is uniform. 

    {\it Condition} \ref{con1}.\eqref{con13}: We will utilize the inequality
    \[\bigg|\sum_{i=1}^{n}a_i\bigg|^p\leq n^{p-1}\sum_{i=1}^{n}|a_i|^p, \quad \mbox{for all} \ a_i\in\mb{R}, \ p\geq 1, \]
    which can be derived by Jensen inequality with respect to $|\cdot|^p$. By assumption \ref{a2}, there exists a $\delta>0$ such that $\mb{E}|U|^{2+\delta}<\infty$, $\mb{E}[|\nablaq|^{2+\delta}|\mathcal{F}]<\infty$. Then, for any $\lambda_{t+h}^{(h)}\in D$, 
    \[
    \begin{split}
        0\leq&h^{-1}\mb{E}\left[\left.\left|X_{t+h}^{(h)}-X_{t}^{(h)}\right|^{2+\delta}\right|\mathcal{F}_{t}^{(h)}\right]=h^{-1}\mb{E}[|c_h\lambda_{t+h}^{(h)}+y_{t+h}^{(h)}|^{2+\delta}|\mathcal{F}_{t}^{(h)}]\\
        &\leq \frac{2^{1+\delta}|c_h\lambda_{t+h}^{(h)}|^{2+\delta}}{h}+\frac{2^{1+\delta}}{h}\mb{E}\left[\left|h^{1/2}\sqrt{\varphi(\lambda_{t+h}^{(h)})}h^{-1/2}\varepsilon_{t+h}^{(h)}\right|^{2+\delta}\big|\mathcal{F}_{t}^{(h)}\right]\\
        &=\frac{2^{1+\delta}|c_h\lambda_{t+h}^{(h)}|^{2+\delta}}{h}+h^{\delta/2}|\varphi(\lambda_{t+h}^{(h)})|^{1+\frac{\delta}{2}}\mb{E}|U|^{2+\delta}\longrightarrow 0,\ \mbox{as}\ h\to0.
    \end{split}
    \]
    \[
    \begin{split}
    0\leq&h^{-1}\mb{E}\left[\left.\left|\lambda_{t+2h}^{(h)}-\lambda_{t+h}^{(h)}\right|^{2+\delta}\right|\mathcal{F}_{t}^{(h)}\right]\\
    &=h^{-1}\mb{E}\left[\left|\omega_h-(1-\beta_h)\lambda_{t+h}^{(h)}+\alpha_{h}S(\lambda_{t+h}^{(h)})\nablaq_{t+h}^{(h)}\right|^{2+\delta}\big|\mathcal{F}_{t}^{(h)}\right]\\
    &\leq\frac{3^{1+\delta}}{h}\left[|\omega_h|^{2+\delta}+\left|(1-\beta_h)\lambda_{t+h}^{(h)}\right|^{2+\delta}+\left|\alpha_{h}S(\lambda_{t+h}^{(h)})\right|^{2+\delta}\mb{E}\left(\left|\nablaq_{t+h}^{(h)}\right|^{2+\delta}|\mathcal{F}_t^{(h)}\right)\right]\\
    &\longrightarrow 0,\ \mbox{as}\ h\to0.
    \end{split}
    \]
    Clearly, \eqref{con13} holds since $\varphi(\lambda)$ and $S(\lambda)$ is continuous.

    {\it Condition} \ref{con2}: We assert that such a $\sigma(z,t)$ exists because $a(z,t)$ is a symmetric positive definite matrix which admits a Cholesky decomposition. In fact, we observe that $\varphi(\lambda)\eta>0$ and the second-order sequential principal minor
    \[\begin{split}
        &\alpha^2 S^2(\lambda)\gamma(\lambda)\varphi(\lambda)\eta-\alpha^2 S^2(\lambda)\frac{[\varphi'(\lambda)]^2}{4\varphi(\lambda)}\rho^2\\
        =&\alpha^2 S^2(\lambda)\left[m_2\left(\frac{\varphi'(\lambda)}{\varphi(\lambda)}\right)^2\varphi(\lambda)\eta-\frac{[\varphi'(\lambda)]^2}{4\varphi(\lambda)}\rho^2\right]\\
        =&\alpha^2 S^2(\lambda)\frac{[\varphi'(\lambda)]^2}{\varphi(\lambda)}(m_2\eta-\frac{\rho^2}{4})>0,
    \end{split}\]
    Furthermore, we can explicitly calculate a $\sigma(z,t)$ satisfies the condition. Suppose lower triangualr matrix $\sigma(z,t)=\begin{bmatrix}
        \sigma_1 & 0\\
         \sigma_3 & \sigma_4
       \end{bmatrix}$, satisfying $\sigma(z,t)\sigma(z,t)^\TT=a(z,t)$, i.e.
       \begin{equation}\label{sigma_i}
        \begin{bmatrix}
            \sigma_1^2 & \sigma_1\sigma_3\\
            \sigma_1\sigma_3 & \sigma_3^2+\sigma_4^2
        \end{bmatrix}=\begin{bmatrix}
            \varphi(\lambda)\eta & \alpha S(\lambda)\frac{-\varphi'(\lambda)}{2\sqrt{\varphi(\lambda)}}\rho\\
            \alpha S(\lambda)\frac{-\varphi'(\lambda)}{2\sqrt{\varphi(\lambda)}}\rho & \alpha^2 S^2(\lambda)\gamma(\lambda)
        \end{bmatrix}.
    \end{equation}
    Then we have 
    \begin{equation}\label{sigma}
        \sigma_1=\sqrt{\varphi(\lambda)\eta}, \quad \ \sigma_3=\alpha S(\lambda)\frac{-\varphi'(\lambda)}{2\varphi(\lambda)\sqrt{\eta}}\rho, \quad \ \sigma_4=\alpha S(\lambda)\frac{\varphi'(\lambda)}{\varphi(\lambda)}\sqrt{m_2-\frac{\rho^2}{4\eta}}.
    \end{equation}
    Clearly, such $\sigma(z,t)$ is a continuous function satisfies condition \ref{con2}. 

    {\it Condition} \ref{con3} is satisfied due to $Z_0^{(h)}\tod Z_0$, and we now turn to {\it Condition} \ref{con4}. In fact, $a(z,t)=a(\lambda)$, $b(z,t)=b(\lambda)$ are independent of $x,t$, thus SDEs \eqref{sde} is an It\^o type equation. As demonstrated in \cite{stroock1997multidimensional}, an unique weak solution exists for \eqref{sde} if $b(\lambda)$, $\sigma(\lambda)$, $a(\lambda)$ are continuous and bounded, and $a(\lambda)$ is uniformly positive definite. We are already show that $b(\lambda)$, $\sigma(\lambda)$, $a(\lambda)$ are continuous on compact set $\Gamma$, thus bounded. We need to prove there exists a $\zeta>0$ such that for all $\lambda\in \Gamma$, $a(\lambda)-\zeta I$ is positive definite, where  $I$ unit matrix. 

    Because $\varphi(\lambda)$, $S^2(\lambda)\gamma(\lambda)$, $[S(\lambda)\varphi'(\lambda)]^2/\varphi(\lambda)$ are all positive continuous functions on $\Gamma$, and $\eta m_2-\rho^2/4>0$, there exist $c_1, c_2, M>0$ such that for all $\lambda\in\Gamma$, 
    \begin{equation}\label{first-order}
        \varphi(\lambda)\eta>c_1, \quad \frac{[\varphi'(\lambda)]^2}{\varphi(\lambda)}\alpha^2S^2(\lambda)(\eta m_2-\frac{\rho^2}{4})>c_2, \quad \varphi(\lambda)\eta+\alpha^2S^2(\lambda)\gamma(\lambda)<M.
    \end{equation}
    Then we have  
    \begin{equation}\label{second-order}
        \begin{split}
            \frac{[\varphi'(\lambda)]^2}{\varphi(\lambda)}\alpha^2S^2(\lambda)(\eta m_2-\frac{\rho^2}{4})&>\frac{c_2}{M}[\varphi(\lambda)\eta+\alpha^2S^2(\lambda)\gamma(\lambda)]\\
            &>\frac{c_2}{M}[\varphi(\lambda)\eta+\alpha^2S^2(\lambda)\gamma(\lambda)]-\left(\frac{c_2}{M}\right)^2. 
        \end{split}
    \end{equation}
    Taking $\zeta=\min\left\{c_1, \frac{c_2}{M}\right\}$ guarantees $a(\lambda)-\zeta I$ is definite for all $\lambda\in \Gamma$. 
    
    According to Theorem \ref{corethm}, the following diffusion process is the continuous-time limit of QSD volatility models: 
    \begin{equation}\label{QSD-sde_1}
        \left\{\begin{aligned}
            &\dd X_t=c\lambda_t\dd t+\sigma_1\dd B_t^{(1)},\\
            &\dd \lambda_t=(\omega-\theta\lambda_t)\dd t+\sigma_3\dd B_t^{(1)}+\sigma_4\dd B_t^{(2)},\\
            &\mb{P}(Z_0\in B)=\nu_0(B),  \quad \mbox{for any} \ B\in\mathcal{B}(\mb{R}\times \Gamma),
        \end{aligned}\right.
    \end{equation}
    where $B_t^{(1)}, B_t^{(2)}$ are two independent Brownian motions, and $\sigma_1,\sigma_3,\sigma_4$ are given by \eqref{sigma}. Actually, the noise driving $\lambda_t$ is correlated with that driving $X_t$, so we transform \eqref{QSD-sde_1} into an equivalent process with a more intuitive form. Since $\sigma_1^2=\varphi(\lambda_t)\eta$, $\sigma_3^2+\sigma_4^2=\alpha^2 S^2(\lambda_t)\gamma(\lambda_t)$, we have
    \[\sigma_1 B_t^{(1)}\sim \sqrt{\varphi(\lambda_t)\eta}W_t^{(1)}, \quad \sigma_3 B_t^{(1)}+\sigma_4 B_t^{(2)}\sim \alpha S(\lambda_t)\sqrt{\gamma(\lambda_t)}W_t^{(2)}, \]
    where $W_t^{(1)}, W_t^{(2)}$ are two standard Brownian motions. Their covariance is given by  
    \[\begin{split}
        &\mbox{Cov}\left(W_t^{(1)},W_t^{(2)}\right)=\frac{\mbox{Cov}\left(\sigma_1 B_t^{(1)},\sigma_3 B_t^{(1)}+\sigma_4 B_t^{(2)}\right)}{\alpha S(\lambda_t)\sqrt{\gamma(\lambda_t)\varphi(\lambda_t)\eta}}\\
        &=\frac{\sigma_1\sigma_3\mbox{Var}(B_t^{(1)})}{\alpha S(\lambda_t)\sqrt{\gamma(\lambda_t)\varphi(\lambda_t)\eta}}=\frac{\alpha S(\lambda_t)\frac{-\varphi'(\lambda_t)}{2\sqrt{\varphi(\lambda_t)}}\rho t}{\alpha S(\lambda_t)\sqrt{\gamma(\lambda_t)\varphi(\lambda_t)\eta}}\\
        &=-\frac{\varphi'(\lambda_t)\rho t}{2\varphi(\lambda_t)\sqrt{\eta\gamma(\lambda_t)}}=-\frac{\rho t}{2\sqrt{\eta m_2}}.
    \end{split}\]
    In summary, as $h\to0$, $Z_t^{(h)}=\left(X_t^{(h)}, \lambda_{t+h}^{(h)}\right)$ weakly converges towards following SDE's unique weak solution: 
    \[\left\{\begin{aligned}
        &\dd X_t=c\lambda_t\dd t+\sqrt{\varphi(\lambda_t)\eta}\dd W_t^{(1)},\\
        &\dd \lambda_t=(\omega-\theta\lambda_t)\dd t+\alpha S(\lambda_t)\sqrt{\gamma(\lambda_t)}\dd W_t^{(2)},\\
        & \mbox{\rm Cov}\left[\dd W_t^{(1)},\dd W_t^{(2)}\right]=-\frac{\rho}{2\sqrt{\eta m_2}}\dd t, \\
        & \mb{P}(Z_0\in B)=\nu_0(B),  \quad \mbox{for any} \ B\in\mathcal{B}(\mb{R}\times \Gamma).
    \end{aligned}\right.\]

\section{Proof of Theorem \ref{main thm2}}\label{appendixC}
It is similar to the proof in the previous part, here we prove briefly. First, it can be derived from Lemma \ref{nablalem} that
    \[\begin{array}{l}
        \mb{E}(\widetilde{\nablaq}|\mathcal{F})=0,\\
        \mb{E}(\widetilde{\nablaq}^2|\mathcal{F})=(m_2-\mu^2)\left[\frac{\varphi'(\lambda)}{\varphi(\lambda)}\right]^2,\\
        \mb{E}(\widetilde{\nablaq}\varepsilon|\mathcal{F})=\mb{E}(\nablaq\varepsilon|\mathcal{F}).
    \end{array}
    \]
    It is clear that the calculations for $X_t$ are identical to those in the previous case. We proceed directly to consider the drift and second moment per unit time of $\lambda_t$.
    \[\begin{split}
        &h^{-1}\mb{E}\left[\left.\lambda_{t+2h}^{(h)}-\lambda_{t+h}^{(h)}\right|\mathcal{F}_{t}^{(h)}\right]\\
        &=h^{-1}\omega_h-h^{-1}(1-\beta_h)\lambda_{t+h}^{(h)}+h^{-1}\frac{\mu_h\alpha_h S(\lambda_{kh}^{(h)})\varphi'(\lambda_{kh}^{(h)})}{\varphi(\lambda_{kh}^{(h)})}\\
        &\longrightarrow \omega-\theta\lambda_t+\frac{\alpha\mu S(\lambda_t)\varphi'(\lambda_t)}{\varphi(\lambda_t)}, \quad \mbox{as} \ h\to0.
    \end{split}\]
    In fact, as $h\to0$, all terms in the second moment per unit of time vanish except for $[\alpha_{h}S(\lambda_{t+h}^{(h)})\widetilde{\nablaq}_{t+h}^{(h)}]^2$, so that 
    \[\begin{split}
        &h^{-1}\mb{E}\left[\left.\left(\lambda_{t+2h}^{(h)}-\lambda_{t+h}^{(h)}\right)^2\right|\mathcal{F}_{t}^{(h)}\right]\longrightarrow \alpha^2 S^2(\lambda_t)(m_2-\mu^2)\left[\frac{\varphi'(\lambda)}{\varphi(\lambda)}\right]^2, \quad \mbox{as} \ h\to0.
    \end{split}\]
    Therefore, we have
    \[\begin{split}
        &b(z,t)=\begin{bmatrix}c\lambda\\ \omega-\theta\lambda_t+\frac{\alpha\mu S(\lambda_t)\varphi'(\lambda_t)}{\varphi(\lambda_t)}
        \end{bmatrix},\\
        &a(z,t)=\begin{bmatrix}
            \varphi(\lambda)\eta & \alpha S(\lambda)\frac{-\varphi'(\lambda)}{2\sqrt{\varphi(\lambda)}}\rho\\
             \alpha S(\lambda)\frac{-\varphi'(\lambda)}{2\sqrt{\varphi(\lambda)}}\rho & \alpha^2 S^2(\lambda_t)(m_2-\mu^2)\left[\frac{\varphi'(\lambda)}{\varphi(\lambda)}\right]^2
        \end{bmatrix}.
    \end{split}\]
    Since there exists a $\delta>0$ such that 
    \[\mb{E}(|\widetilde{\nablaq}|^{2+\delta}|\mathcal{F})\leq 2^{1+\delta}\mb{E}(|\nablaq|^{2+\delta}|\mathcal{F})+2^{1+\delta}\left|\frac{\mu\varphi'(\lambda_{n})}{\varphi(\lambda_{n})}\right|^{2+\delta}<\infty, \]
    meanwhile, the second-order sequential principal minor 
    \[\alpha^2 S^2(\lambda)\frac{[\varphi'(\lambda)]^2}{\varphi(\lambda)}[(m_2-\mu^2)\eta-\frac{\rho^2}{4}]>0. \]
    By the same procedure as in the previous part, it can be shown that remaining conditions are satisfied. 
    
    Finally, we obtain a bivariate diffusion as the weak convergence limit and transform it into the form driven by two correlated Brownian motions. According to $a(z,t)$, the covariance between the two new Brownian morions is given by
    \[\mbox{Cov}\left(W_t^{(1)},W_t^{(2)}\right)=\frac{\alpha S(\lambda)\frac{-\varphi'(\lambda)}{2\sqrt{\varphi(\lambda)}}\rho t}{\alpha S(\lambda)\frac{\varphi'(\lambda)}{\varphi(\lambda)}\sqrt{(m_2-\mu^2)\varphi(\lambda_t)\eta}}=-\frac{\rho t}{2\sqrt{\eta (m_2-\mu^2)}}.\]
    Thus, the limit \eqref{new-QSD-sde} is obtained.

\newpage
\bibliographystyle{apalike}
\bibliography{ref}
\end{document}